\newcommand{\R}{\mathbb{R}}
\newcommand{\Hom}{{\rm Hom}}
\newcommand{\bZ}{{\mathbb Z}}
\newcommand{\bC}{{\mathbb C}}
\newcommand{\bA}{{\mathbb A}}
\newcommand{\bR}{{\mathbb R}}
\newcommand{\bG}{{\mathbb G}}
\newcommand{\Spec}{{\rm Spec}\,}
\newcommand{\Pic}{{\rm Pic}\,}
\newcommand{\Val}{{\rm Val}\,}
\newcommand{\bfK}{{\mathbf K}}
\newcommand{\kerr}{\mathrm{ker}\,}
\newcommand{\bN}{{\mathbb N}}
\newcommand{\rank}{\mathrm{rank}\,}
\newtheorem{theorem}{\textbf{Theorem}}[section]
\newtheorem{proposition}[theorem]{\textbf{Proposition}}
\newtheorem{lemma}[theorem]{\textbf{Lemma}}
\newtheorem{corollary}[theorem]{\textbf{Corollary}}
\newtheorem{conjecture}[theorem]{\textbf{Conjecture}}
\newtheorem{definition}[theorem]{\textbf{Definition}}
\begin{document}

\title{Distribution of rational points on toric varieties: All the heights}
\author{Arda Demirhan and Ramin Takloo-Bighash}
\maketitle

Abstract: In this paper we prove a formula for the number of rational points in a certain height box for a split toric variety over a number field. 

\section{Introduction}
The purpose of this note is to provide an example where the role played by the exceptional set in Manin's conjecture can be eliminated. In its original formulation Manin's conjectures \cite{FMT, BM} dealt with a Fano variety over a number field and counted rational points with height relative to a metrized line bundle whose underlying line bundle was in the cone of effective divisors, and predicted an asymptotic formula for the number of rational points of bounded height on a small enough open set. One has to consider proper open sets as opposed to the whole variety because there are many examples where a proper Zariski closed subset of a variety can have more rational points than the rest of the variety. The easiest example one can construct is the blowup of $\mathbb P^2$ at the origin, \cite{Serre}, \S 2.12.  As stated these conjectures are wrong \cite{example}. This is due to the presence of dense thin sets with too many rational points. Recently, many works have explored the effects of thin sets on the distribution of rational points and their counting functions. In particular, Lehman, Sengupta, and Tanimoto \cite{LST, LT} have formulated a deep conjecture about the distribution of rational points on any geometrically rationally connected and geometrically integral smooth projective variety with bounded height relative to a metrized line bundle  which is big and nef (if $X$ is Fano, it suffices to assume that the line bundle is big). In this conjecture, there is a careful geometric description of a set $Z$ such that the number of rational points $x \in X(F) \setminus Z$ of height less than $B$ grows like 
$$
c B^a (\log B)^{b-1}
$$
with $c, a, b$ the expected constants, \cite{Peyre, Polarized}, provided that $X(F)$ is not thin. We refer the reader to \cite{LT, LST} for the precise formulation of the conjecture, examples, and references to many works by other mathematicians that motivated the conjecture. 

\

In the other direction, there have been attempts to formulate conjectures that do not require removing exceptional sets, \cite{Peyre2, Peyre3}.   In one approach by Peyre \cite[\S 4]{Peyre3}, the {\em all heights approach}, instead of considering a single height function, one considers the distribution of rational points relative to the height functions coming from {\em all } line bundles, or a family thereof. In the case where the cone of effective divisors is polyhedral, one could, for example, consider the distribution relative to the generators of the cone. Namely, suppose $X$ is a Fano variety over a number field $F$ and let $H_1, \dots, H_n$ be height functions associated to the generators of the cone of effective divisors of $X$. Fix positive real numbers $\beta_1, \dots, \beta_n$. For an open set $U$ and $B> 1$, we set 
$$
N(U, B) = \# \{ x \in U(F) \mid H_i(x) \leq B^{\beta_i}, 1 \leq i \leq n \}. 
$$
Then one might hope to prove an asymptotic formula $C B^a (\log B)^{b-1}$ for $N(U, B)$ with $a, b$ independent of $U$. There are, however, families of varieties of Picard rank $1$ with non-trivial exceptional sets. For some explicit constructions, see \cite[\S 10]{LT}. This means that the {\em all height} approach relative to the generators of the cone of effective divisors for this family of example is not going to remove the effect of exceptional sets on the asymptotic formulae in this case, and one may have to consider larger families of line bundles.  In general, it is not clear that the {\em all heights} approach for any family, and we would expect to need other notions, such as {\em freeness}, to formulate a Manin-type conjecture that does not require removing exceptional sets, \cite{Peyre2, Peyre3}. 

\

In this paper we consider the case of smooth projective toric varieties over a number field whose torus is split, the so-called {\em split toric varieties}, and we show that the all heights approach of Peyre works relative to the family of line bundles associated to all $T$-invariant divisors.  Manin's conjecture in its original form was proved for toric varieties by Batyrev and Tschinkel \cite{anisotropic, BT1, BT2} using harmonic analysis on tori. Toric varieties are the perfect test ground for investigations of the sort we consider here because their geometry is well-understood, and the structure of the boundary divisors is given in terms of combinatorial data.

\

We now state our theorem. Let $X$ be a split smooth projective toric variety defined over a number field $F$, and let $T$ be the open torus in $X$.  Let $\Delta_1, \dots, \Delta_n$ be the $T$-invariant divisors of $X$ as in \S\ref{Tdivisor}.

\begin{theorem}\label{theorem}  With notations as above,  there are metrizations of the line bundles corresponding to the divisors $\Delta_1, \dots, \Delta_n$ such that if $H_1, \dots, H_n$ are the associated height functions, then the following holds: Let $\beta_1, \dots, \beta_n$ be fixed positive real numbers. Then there are constants $C>0$ and $\epsilon > 0$ such that as $B \to \infty$, 
\begin{equation}\label{formula}
\# \{x \in X(F) \mid H_i(x) \leq B^{\beta_i}, 1 \leq i \leq n \}  = CB^{\sum_i  \beta_i} ( 1 + O(B^{-\epsilon})). 
\end{equation}
\end{theorem}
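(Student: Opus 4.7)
The approach I would take is to adapt the harmonic-analytic framework of Batyrev--Tschinkel \cite{BT1,BT2} to the multi-variable all-heights setting. Let $\Sigma$ denote the fan of $X$ in $N_{\bR}$, and let $\Sigma(1) = \{\rho_1, \ldots, \rho_n\}$ be its rays. Since $X$ is smooth and projective, the cone of effective divisors is generated by the classes of the torus-invariant prime divisors $D_1, \ldots, D_n$ attached to the rays. I would fix adelic metrizations of $\mathcal{O}(D_i)$ of the Batyrev--Tschinkel type, giving local heights $H_{i,v}$ and global height $H_i = \prod_v H_{i,v}$, and introduce the multi-variable height zeta function
\begin{equation*}
Z(\s) \;=\; \sum_{x \in T(F)} \prod_{i=1}^n H_i(x)^{-s_i},
\end{equation*}
where $T \subset X$ denotes the open torus. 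A standard Mellin-inversion identity recovers
\begin{equation*}
N_T(B) \;:=\; \#\{x \in T(F) : H_i(x) \le B^{\beta_i},\ 1 \le i \le n\}
\end{equation*}
from a contour integral of $Z(\s) \prod_i (B^{\beta_i s_i}/s_i)$ over $\Re(s_i) = c_i \gg 1$; the full count on $X(F)$ is then assembled by induction on the toric stratification, each stratum being the open torus of a lower-dimensional smooth toric variety to which the same machinery applies.

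The central analytic step is to carry out Poisson summation on $T(\bA_F)/T(F)$, rewriting $Z(\s)$ as an integral over the unitary dual of a factorizable Fourier transform $\widehat H(\s, \chi) = \prod_v \widehat H_v(\s, \chi_v)$. At a non-archimedean place $v$, $\widehat H_v$ is a rational function in the variables $q_v^{-s_i}$ whose shape is dictated by the combinatorics of $\Sigma$; its trivial-character part factors as a product of local Hecke $L$-factors times a regularization that is holomorphic near $\s = \mathbf{1}$. At an archimedean place, $\widehat H_v$ is an oscillatory integral on $\pl$ whose decay in $\chi$ follows by integration by parts in the piecewise-linear variables. Assembling all places, one exhibits $Z(\s)$ in the region $\Re(s_i) > 1$ as
\begin{equation*}
Z(\s) \;=\; \Bigl(\prod_{i=1}^n \zeta_F(s_i)\Bigr) \cdot G(\s) + R(\s),
\end{equation*}
where $G$ is holomorphic and of controlled vertical growth in a neighborhood of $\s = \mathbf{1}$ with $G(\mathbf{1}) \neq 0$, and the non-trivial-character remainder $R$ extends holomorphically to $\Re(s_i) > 1 - \delta$ for some $\delta > 0$.

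With the meromorphic continuation and polar divisor in hand, the final step is to shift each contour past $\Re(s_i) = 1$ to $\Re(s_i) = 1 - \delta'$ for a suitably small $\delta' > 0$. The iterated residue at the transversal intersection $s_1 = \cdots = s_n = 1$ produces the main term $CB^{\sum_i \beta_i}$ with $C = G(\mathbf{1}) \cdot (\text{explicit adelic volume}) > 0$, while the remaining multi-contour integrals are bounded by $O(B^{\sum_i \beta_i - \epsilon})$ with $\epsilon > 0$ proportional to $\delta'$. Because all poles on the boundary hyperplanes are simple and meet transversally, no logarithmic factor appears; the contributions from boundary strata are of strictly smaller order, so the asymptotic for $N_T(B)$ propagates to the full counting function on $X(F)$.

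The principal obstacle I anticipate lies in the meromorphic continuation of $R(\s)$ with uniform bounds in all $n$ variables simultaneously. The multi-variable contour shift requires estimates on $\widehat H(\s, \chi)$ that are polynomial in both the analytic conductor of $\chi$ and in $|\Im(\s)|$, holding uniformly as $\s$ ranges over the shifted contours. Unlike the single-variable case of \cite{BT1,BT2}, one cannot shift one variable at a time without first securing joint holomorphy along the relevant boundary divisor, which demands a multi-parameter extension of the archimedean integration-by-parts estimates, together with a careful combinatorial analysis of the non-archimedean factors as $\chi$ ramifies.
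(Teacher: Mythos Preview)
Your overall architecture---Poisson summation on the adelic torus, local analysis of the Fourier transforms, then induction over the toric boundary strata---matches the paper's. But your description of the polar divisor of $Z(\s)$ contains a genuine error, and this is the heart of the matter.

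You write $Z(\s) = \bigl(\prod_{i=1}^n \zeta_F(s_i)\bigr) G(\s) + R(\s)$ with $G(\mathbf 1)\ne 0$, and propose to pick up an iterated residue at the transversal point $s_1=\cdots=s_n=1$. This is not the structure of $Z(\s)$. The Poisson formula here reads
\[
Z_\Sigma(\varphi)=\mu_T\int_{M_\bR}\Bigl\{\sum_{\chi\in\mathcal U_T}\hat H_\Sigma(\chi,\varphi+im)\Bigr\}\,dm,
\]
and while each $\hat H_\Sigma(\chi,\varphi)$ does carry a factor $\prod_{e\in\Sigma_1}L_f(\chi_e,\varphi(e))$, the $d$-dimensional integral over $M_\bR$ washes out $d$ of the $n$ poles. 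What survives (this is the paper's Proposition~\ref{prop:dela}) is that $Z_\Sigma(\varphi+\varphi_\Sigma)\prod_{j=1}^{\tilde m}\tilde l_j(\varphi)/(\tilde l_j(\varphi)+1)$ extends to a bounded holomorphic function on a tube over a neighborhood of $0$, where the $\tilde l_j$ are $\tilde m=\dim\Pic(X)_\bR=n-d$ linearly independent real forms that \emph{vanish on $M_\bR$}. Thus the polar hyperplanes are not the coordinate hyperplanes $s_i=1$; they meet in the $d$-dimensional affine subspace $\varphi_\Sigma+M_\bR$, not in a point. If you force the factorization you wrote, $G$ will vanish to order $d$ at $\mathbf 1$, so your iterated residue gives $0$, not the main term. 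The absence of a logarithm in the asymptotic comes from $\tilde m-\rank\{\tilde l_j\}=0$ in de la Bret\`eche's theorem, not from transversality of $n$ coordinate hyperplanes.

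The paper handles this by packaging the meromorphic continuation through the Strauch--Tschinkel machinery of distinguished and sufficient functions (Propositions~\ref{prop:distinguished}, \ref{prop:limit}, \ref{thm:sufficient}), which isolates exactly the $n-d$ forms $\tilde l_j$, and then feeds the resulting analytic data into de la Bret\`eche's multivariable Tauberian theorem (Theorem~\ref{thm:dela}) rather than shifting contours by hand. Your direct contour-shift strategy can in principle be made to work---it is essentially what underlies the proof in \cite{dela1,dela2}---but it requires first identifying the correct polar hyperplanes $\tilde l_j=0$ and then carrying out the residue calculus along an affine subspace rather than at a point; the paper sidesteps this by invoking \cite{dela1,dela2} as a black box. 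Your induction over the boundary divisors $D_e$ to pass from $T(F)$ to $X(F)$ is the same as the paper's (Corollary~\ref{coro:main}).
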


This theorem is Corollary \ref{coro:main} below, stated in the notations of \S\ref{Tdivisor}.  The proof has two parts. In the first part we count rational points on the torus and show that Equation \eqref{formula} holds. In the second part we show that the contribution of the boundary is less. This latter step relies on the fact that the boundary of a toric variety is a union of tori, and that we can use analogues of Equation \eqref{formula} for these tori in an inductive fashion.  In the case of toric varieties the exceptional set predicted by the conjecture of Lehman, Sengupta, and Tanimoto is contained in the boundary (Example 5.12 of \cite{LT}), and as indicated our proof shows that the boundary pieces contribute less than the open torus to the asymptotic formula. Unfortunately at present we do not know how to compute the constant $C$ in Theorem \ref{theorem}. 

\

To count rational points on the torus,  we use the methodology of Strauch and Tschinkel \cite{selecta} using height zeta functions.  The height zeta function considered by Strauch and Tschinkel is muti-dimensional, but they restrict the zeta function to a line and use a one dimensional Tauberian theorem. In our case, we do not restrict the zeta function to lines and apply a multi-dimensional Tauberian theorem, Theorem \ref{thm:dela}. The required analytic properties needed for the Tauberian theorem are given in Proposition \ref{prop:dela}. 

\

The splitness assumption in Theorem \ref{theorem} is made only to simplify notation, and to avoid dealing with Galois actions. In general, Equation \eqref{formula} has the same shape, except that the sum $\sum_i \beta_i$ has to be replaced by the similar sum over the Galois orbits of the boundary divisors.  The proof of general case follows from the same argument and requires only minor modifications.   

\

It might at first be surprising that there is no power of $\log$ in the statement of Theorem \ref{theorem},  given that most Manin-type results have a $\log$ term. We give two heuristic reasons for this phenomenon.  Consider, for example, the following situation: For a positive integer $n$, let $d(n)$ be the number of divisors of $n$. Then it is a well-known result that 
\begin{equation}\label{hyperbola1} 
\sum_{n \leq X} d(n) \sim X \log X 
\end{equation}
as $X \to \infty$. Now compare this with the following trivial statement
\begin{equation}\label{hyperbola2}
\# \{(x, y) \in \bZ^2 \mid 1 \leq x \leq X, 1 \leq y \leq Y \} = XY. 
\end{equation} 
Dirichlet's Hyperbola Method deduces the proof of Equation \eqref{hyperbola1} to Equation \eqref{hyperbola2}.  Theorem \ref{theorem} is the analogue of Equation \eqref{hyperbola2} in our context, and standard Manin type results should follow from it using general Dirichlet Hyperbola theorem type results. At present this is not possible as we do not have enough power saving in in the error term in Theorem \ref{theorem} to implement such an idea. 

\

Our next heuristic reasoning relies on the theory of {\em Universal Torsors}. For a split Fano toric variety over the field of rational numbers, there is a theory of universal torsors worked out by Salberger \cite{Salberger}. According to this theory in order to count rational points on a toric variety one needs to count integral points satisfying certain coprimality conditions on the universal torsor. The coordinates on the universal torsors are given, essentially, by the height functions $H_1, \dots, H_n$. For that reason, the expression on the left hand side of Equation \eqref{formula} counts integral points with certain coprimality conditions inside a box. Without the coprimality restrictions, the number of integral points in the box is approximately a constant multiple of $B$ raised to $\sum_i \beta_i$. Imposing the coprimality conditions, via an analysis involving Peyre's M\"obius function \cite[7.1.7]{Peyre} as in \cite[\S 11]{Salberger}, just modifies the constant in front of $B^{\sum_i \beta_i}$.  Because of the existence of units, the theory of universal torsors is far more complicated over fields other than the rational numbers. Pieropan \cite{Marta} has implemented Salberger's program for the case of imaginary quadratic fields taking advantage of the fact that the group of units is finite. In this case too a heuristic reasoning as above recovers Equation \eqref{formula}. Our approach makes no assumption about the number field.

\

We conclude this introduction by formulating two conjectures. Our first conjecture, involving general height boxes, is motivated by the above heuristic reasoning using universal torsors. 

\begin{conjecture}\label{conjecture-naive}  Let the notation be as in Theorem \ref{theorem}. There are metrizations of the line bundles corresponding to the divisors $\Delta_1, \dots, \Delta_n$ in such a way that if $H_1, \dots, H_n$ are the associated height functions, then the following holds: There are real numbers $C>0$ and $\epsilon>0$ such that for any fixed $a > 1$, and any real numbers $B_1, \dots, B_n > 0$ satisfying 
$$
\log B_i / \log B_j  \in (a^{-1}, a), \quad \text{ for all } i, j
$$ and $\min_i B_i \to + \infty$ we have 
\begin{equation}\label{formula-conjecturenaive}
\# \{x \in X(F) \mid H_i(x) \leq B_i, 1 \leq i \leq n \}  = C(\prod_i B_i) ( 1 + O(\prod_i B_i^{-\epsilon})). 
\end{equation}
\end{conjecture}

It would also be of interest to have a result similar to Theorem \ref{theorem} for the generators of the cone of effective divisors. In order to state the conjecture we need a couple of notations. Let $X$, $T$, and $F$ be as above.  Let $\Lambda_{\rm eff}(X)  \subset \Pic(X)_\R$ be the cone of effective divisors defined in \S \ref{cone}, and let $D_1, \dots, D_r$ be divisors whose classes form the extremal rays of the cone $\Lambda_{\rm eff}(X)$ in $\Pic(X)_\R$. Let $K_X \in \Pic(X)_\R$ be the canonical class. Write  $- K_X = \sum_{i=1}^r r_i D_i$, for real numbers $r_i > 0$.  Then we have the following conjecture:

\begin{conjecture}\label{conjecture}  There are metrizations of the line bundles corresponding to the divisors $D_1, \dots, D_r$ in such a way that if $H_{D_1}, \dots, H_{D_r}$ are the associated height functions, then the following holds: Let $\beta_1, \dots, \beta_r$ be fixed positive real numbers. Then there are constants $C'>0$ and $\epsilon > 0$ such that as $B \to \infty$, 
\begin{equation}\label{formula-conjecture}
\# \{x \in X(F) \mid H_{D_i}(x) \leq B^{\beta_i}, 1 \leq i \leq r \}  = C' B^{\sum_i  r_i \beta_i} ( 1 + O(B^{-\epsilon})). 
\end{equation}
\end{conjecture}

There should also be a statement similar to Conjecture \ref{conjecture-naive} for general height boxes associated to the generators of the cone of effective divisors. 

\

The proofs of these conjectures should follow an argument similar to that of Theorem \ref{theorem}, in that one should first prove Formula \eqref{formula-conjecture} with $X(F)$ replaced by $T(F)$, and then do induction. For Conjecture \ref{conjecture-naive} the induction step is identical to the one in the proof of Theorem \ref{theorem}, but the induction step for the proof of Conjecture \ref{conjecture} is rather subtle.  Unfortunately, however, the Tauberian theorem we use for the proof of Theorem \ref{theorem}, Theorem \ref{thm:dela}, is not strong enough to accomplish Equation \eqref{formula-conjecture} for the torus $T$.

\

This paper is organized as follow. Section \ref{sect:2} gives the basic definitions regarding toric varieties and their associated height functions. Section \ref{sect:3} contains the analytic properties of the height zeta function. Section \ref{sect:4} has the proof of the main theorem.

\

This work owes a great deal of intellectual debt to Emmanuel Peyre and Yuri Tschinkel. We thank them both for useful conversations and for their willingness to share their insights. We also wish to thank Tim Browning, Jordan Ellenberg, Sho Tanimoto, and Akshay Venkatesh, and the anonymous referee for useful communications and comments.  The second author is partially supported by a Collaboration Grant from the Simons Foundation. 

\section{Preliminiaries}\label{sect:2}
The standard elementary introduction to the geometry of toric varieties is \cite{Fulton}. We use the notations and terminology of \cite{selecta}. In order to make this paper as self-contained as possible we review the basic theory of split toric varieties and their height functions in this section. 

\subsection{Toric varieties} 

\subsubsection{} Let $T$ be a split algebraic torus of dimension $d$ over a number field $F$, and let $M = X^*(T), N = \Hom(M, \bZ)$.  We 
let $\Sigma$ be a complete regular 
fan in $N_\bR$, and we define $X= X_\Sigma$ to be the smooth projective toric variety corresponding to $\Sigma$. The variety $X$ is 
obtained by patching together affine open sets 
$$
U_\sigma = \Spec(F[M \cap \check \sigma])
$$
for $\sigma \in \Sigma$, and for each $\sigma$, 
$$
\check \sigma = \{ m \in M_\bR \mid n(m) \geq 0, \forall n \in \sigma \}. 
$$

\subsubsection{} We define $PL(\Sigma)$ be the group of functions $\varphi$ on $N_\bR$ that satisfy the following two conditions: (i) $\varphi(N)
\subset \bZ$, (ii) for each $\sigma \in \Sigma$, the restriction of $\varphi$ is equal to the restriction of a linear function on $N$ to 
$\sigma$. We call $PL(\Sigma)$ the group of  $\Sigma$-piecewise linear integral functions on $N_\R$. For $\varphi \in 
PL(\Sigma)$ and every $d$ dimensional $\sigma \in \Sigma$, there is a unique $m_{\varphi, \sigma} \in M$ such that 
$$
\varphi(n) = n(m_{\varphi, \sigma}). 
$$
For a general $\sigma \in \Sigma$, we pick a $d$ dimensional $\sigma' \in \Sigma$ and we set
$$
m_{\varphi, \sigma} = m_{\varphi, \sigma'}. 
$$
To any $\varphi \in PL(\Sigma)$ we associate a line bundle $L_\varphi$ on $X$ by setting 
$$
L_\varphi = \left(\coprod_{\sigma \in \Sigma} U_\sigma \times \bA^1 \right)/\sigma
$$
where $(x, a) \in U_\sigma \times \bA^1$ and $(x', a') \in U_{\sigma'} \times \bA^1$ are identified if $x =x' \in U_\sigma \cap
U_{\sigma'}$ and $a' = \frac{m_{\varphi, \sigma'}}{m_{\varphi,\sigma}}(x) \cdot a$.  The class of $(x, a)$ will be denoted by 
$[x, a]$.  There is a canonical $T$ action on $L_\varphi$ given by
$$
[x, a] \cdot t = [x \cdot t, m_{\varphi, \sigma}(t) \cdot a],
$$
if $x \in U_\sigma$. This action identifies $PL(\Sigma)$ with $T$-linearized line bundles on $X$. We have an exact 
sequence 

\begin{equation} \label{sequence}
0 \to M \to PL(\Sigma) \to \Pic X \to 0
\end{equation}
where the map $\eta: PL(\Sigma) \to \Pic X$ is given by $\varphi \mapsto L_\varphi$.  We let $PL(\Sigma)_\R = PL(\Sigma) \otimes \R$, $\Pic(X)_\R = \Pic (X) \otimes \R$. We extend $\eta$ linearly to a map $\eta_\R: PL(\Sigma)_\R \to \Pic(X)_\R$.  

\subsubsection{}\label{cone}
We define {\em the cone of effective divisors} $\Lambda_{\rm eff}(X)$ to be the cone in $\Pic(X)_\R$ generated by classes of effective divisors on $X$. Let us describe the cone $\Lambda_{\rm eff}(X)$.  We define  $\Sigma_1 \subset N$ to be the set of primitive integral generators of the one-dimensional cones in $\Sigma$ 
and we set 
$$
PL(\Sigma)^+ = \{ \varphi \in PL(\Sigma)_\bR \mid \varphi(e) > 0, \forall e \in \Sigma_1 \}.
$$
It is known that 
\begin{equation}\label{eq:effective}
\Lambda_{\rm eff}(X) = \overline{\eta_\R(PL(\Sigma)^+)}
\end{equation}
where the closure is taken in $\Pic(X)_\bR$.  A fact of utmost importance is the statement that if we define $\varphi_\Sigma$
to be the piecewise-linear function which is defined by $\varphi_\Sigma (e) =1$ for all $e \in \Sigma_1$, then 
$L_{\varphi_\Sigma}$ is isomorphic to the anti-canonical line bundle on $X$. 

\subsubsection{}\label{Tdivisor}
For each $e \in \Sigma_1$, let $T_e$ be the $(d-1)$-dimensional torus orbit corresponding to the cone $\bR_{\geq 0}e \in \Sigma$. Define $D_e$ to be the Zariski closure of $T_e$ in $X$.  Proposition 1.2.11 of \cite{anisotropic}
asserts that the images of $D_e$, $e \in \Sigma_1$, generate the cone $\Lambda_{\rm eff}(X)$. Each divisor $D_e$ is a toric variety in its own right.  Let $(N_e)_\bR = N_\bR/ \bR e$, and let $\Sigma_e$ be the cones in $\Sigma$ that contain $\bR_{\geq 0} e$ as a face.  Then for each $\sigma \in \Sigma_e$, we let $\bar \sigma = \sigma + \bR e / \bR e$. The collection $\{ \bar \sigma, \sigma \in \Sigma_e\}$, again denoted $\Sigma_e$, is a fan in $(N_e)_{\bR}$. The toric variety corresponding to this fan is $D_e$. 

\subsection{Metrizations} Following \cite{selecta} we introduce an adelic metric on $L_\varphi$. For $\sigma \in \Sigma$
and $v \in \Val(F)$ we set 
$
\bfK_{\sigma, v} = \{ x \in U_\sigma(F_v) \mid |m(x)|_v \leq 1, \forall m \in \check\sigma \cap M\}.
$
If $x \in \bfK_{\sigma, v}, a \in F_v$, then $[x, a] \in L_\varphi(F)$ is $F_v$-rational, and we put 
$$
\| [x, a] \|_x =|a|_v. 
$$
This is well-defined. The family $\|\cdot \|_v = (\| \cdot \|_x)_{x \in X(F_v)}$ is a $v$-adic metric on $L_\varphi$, and 
$\mathcal L_\varphi = (L_\varphi, (\|\cdot \|_v)_v)$ is a metrization of $L_\varphi$. Let $\bfK_{T, v}$  be the maximal 
compact subgroup. For $x \in T(F_v)$, consider the map $M \to \bZ$, for $v$ non-archimedean, and $M \to \bR$ for $v$ 
archimedean, given by 
$$
m \mapsto - \frac{\log ( |m(x)|_v)}{\log q_v}
$$
where for $v$ non-archimedean, $q_v$ is the size of the residue field of $F_v$, and for $v$ archimedean, we set 
$\log q_v =1$. This map provides an isomorphism $T(F_v)/\bfK_{T, v} \to N$ for $v$ non-archimedean, and $N_\bR$ 
otherwise. Let $\bar{x}$ be the image of $x \in T(F_v)$ in $N$, resp. $N_\bR$ for archimedean $v$. For $\varphi
\in PL(\Sigma)_\bC$ define a function $H_{\Sigma, v}(\cdot, \varphi): T(F_v) \to \bC$ by 
$$
H_{\Sigma, v}(x, \varphi) = e^{-\varphi(\bar x) \log(q_v)}.
$$
The corresponding global height function $H_\Sigma(\cdot, \varphi): T(\bA) \to \bC$ defined by 
$$
H_\Sigma(x, \varphi) = \prod_v H_{\Sigma, v}(x_v, \varphi)
$$
is well-defined. 
\subsection{$\mathcal A_T$ and $\mathcal U_T$}
Let $\bfK_T = \prod_v \bfK_{T, v} \subset T(\bA)$. Define
$$
\mathcal A_T = (T(\bA)/T(F)\bfK_T)^*
$$
to be the group of unitary characters of $T(\bA)$ which are trivial on $T(F)\bfK_T$. For $m \in M$ we 
define the character $$\chi^m(x) = e^{i \log(|m(x)|_\bA)}.$$
The map $m \mapsto \chi^m$ provides us with an embedding $M_\bR \to \mathcal A_T$. For any archimedean place
$v$ and $\chi \in \mathcal A_T$ there is $m_v = m_v(\chi) \in M_\bR$ such that 
$$\chi_v(x_v) = e^{-i \bar x_v(m_v)}$$
for $x_v \in T(F_v)$. We then get a homomorphism 
$$
\mathcal A_T \to M_{\bR, \infty} = \oplus_{v \mid \infty} M_\bR, 
$$
given by sending $\chi$ to the vector $m_\infty(\chi) = (m_v(\chi))_{v \mid \infty}$.  
In the sequel, given a real vector space $W$, we set $W_\infty = \oplus_{v \mid \infty} W$.

We define $T(\bA)^1$ to be the common
kernel of the all maps $T(\bA) \to \bR_{>0}$, $x\mapsto |m(x)|_\bA$, $m \in M$. Set 
$$
\mathcal U_T = (T(\bA)^1/T(F)\bfK_T)^*. 
$$
If we fix an isomorphism $T \tilde\longrightarrow \, \mathbb G_{m, F}^d$ and a projection $\mathbb G_m(\bA) \to \mathbb
G_{m}(\bA)^1$ we obtain a splitting of the exact sequence 
$$
1 \rightarrow T(\bA)^1 \rightarrow T(\bA) \rightarrow T(\bA)/T(\bA)^1 \to 1. 
$$
We then obtain a decomposition 
$$
\mathcal A_T = M_\bR \oplus \mathcal U_T
$$
and
\begin{equation}\label{sum}
M_{\bR, \infty} = M_\bR \oplus M_{\bR, \infty}^1
\end{equation}
with $M_{\bR, \infty}^1$ being the minimal $\R$-subspace of $M_{\bR, \infty}$ containing the image of $\mathcal U_T$
under the map $\mathcal A_T \to M_{\bR, \infty}$ defined above. For the remainder of this work we fix these splittings. Note
that by Dirichlet's unit theorem, the image of $\mathcal U_T \to M_{\bR,\infty}^1$ is a lattice of maximal rank. 

\

We also fix measures as follows. For an archimedean place $v$, let $dx_v$ be the Haar measure on $T(F_v)$ 
giving $\bfK_{T, v}$ volume one. For archimedean $v$, we give $T(F_v)/\bfK_{T, v}$ the pullback of the Lebesque 
measure on $N_\bR$, giving volume $1$ to $N_{\bR}/N$. We then give $\bfK_{T, v}$ the Haar measure with total 
measure one.  We then obtain an invariant measure on $T(F_v)$.  On $T(\bA)$ we have an adelic measure 
$dx = \prod_v dx_v$. 

\section{Height zeta function}\label{sect:3}
In this section we establish the basic properties of the height zeta function. 
\subsection{Integrals of height functions}\label{subsect:integral}
For a character $\chi: T(F_v) \to S^1$ define 
$$
\hat H_{\Sigma, v}(\chi, \varphi) = \int_{T(F_v)} H_{\Sigma, v}(x_v, \varphi)\chi(x_v) dx_v. 
$$
Suppose the integral is convergent. Then, as $H$ is invariant under $\bfK_{T, v}$, the value of the integral is zero unless $\chi$ is trivial on $\bfK_{T, v}$. These integrals converge absolutely for $\varphi \in PL(\Sigma)^+$.

\

Let $v$ be an archimedean place of $F$. Any $d$ dimensional cone $\sigma \in \Sigma$ is simplicial, generated by 
$\Sigma_1 \cap \sigma$. Let $\chi$ be unramified. This means $\chi(x) = e^{- \bar{x}(m)}$ for some $m \in M_\bR$. 
Then 
$$
\hat H_{\Sigma, v} (\chi, \varphi) = \sum_{\dim \sigma = d} \prod_{e \in \sigma \cap \Sigma_1} \frac{1}{\varphi(e) + i e(m)}. 
$$
We also compute the integral at the non-archimedean places. We have a set of variables $\underline{u}=
\{u_e\}_{e \in \Sigma_1}$. Set 
for each $\sigma \in \Sigma$
$$
R_\sigma((u_e)_e) = \prod_{e \in \sigma \cap \Sigma_1} \frac{u_e}{1-u_e}, 
$$
and define 
$$
R_\Sigma((u_e)_e) = \sum_{\sigma \in\Sigma} R_\sigma((u_e)_e), 
$$
and
$$
Q_\Sigma((u_e)_e) = R_\Sigma((u_e)_e) \prod_{e \in \Sigma_1} (1-u_e). 
$$
Then if $\chi$ is an unramified unitary character of $T(F_v)$ and if $\Re \phi \in PL(\Sigma)^+$, then 
$$
\hat H_{\Sigma, v}(\chi, \varphi) = Q_\Sigma((\chi(e) q_v^{-\varphi(e)}) \prod_{e \in \Sigma_1} 
(1- \chi(e) q_v^{-\varphi(e)})^{-1}.
$$
Any $e \in\Sigma_1$ defines a homomorphism $F[M] \to F[\bZ]$, and by duality, a morphism of algebraic tori 
$\mathbb G_m \to T$. For any character $\chi \in\mathcal A_T$, we let $\chi_e$ be the Hecke character 
$$
\bG (\bA) \to T(\bA) \rightarrow S^1. 
$$
Let 
$$
L_f(\chi_e, s) = \prod_{v \nmid \infty} (1 - \chi_e(\pi_v)q_v^{-s})^{-1}. 
$$
The product converges for $\Re s >1$. A consequence of this is that the global Fourier transform 
$$
\hat H_\Sigma(\chi. \varphi) = \int_{T(\bA)} H_\Sigma(x, \varphi)\chi(x) \, dx 
$$
converges absolutely if $\Re \varphi \in\varphi_\Sigma + PL(\Sigma)^+$.

\subsection{An inequality}
 For $\chi \in \mathcal A_T$
and $\Re \varphi \in \frac{1}{2} \varphi_\Sigma + PL(\Sigma)^+$, we put 
$$
\zeta_\Sigma(\chi, \varphi) = \prod_{v \mid \infty} \hat H_{\Sigma, v}(\chi_v, \varphi) \prod_{v \nmid \infty} Q_\Sigma 
((\chi_v(e)q_v^{-\varphi(e)})_e).
$$
The following lemma follows from the proof of Prop. 2.3.2 of \cite{anisotropic}. 
\begin{lemma}
Let $\bfK$ be a compact subset of $\frac{1}{2} \varphi_\Sigma + PL(\Sigma)^+$, and let $\mathcal T_\bfK \subset 
PL(\Sigma)_\bC$ be the tube domain over $\bfK$. Then there is a constant $c = c(\bfK)$ such that for all 
$\varphi \in \mathcal T_\bfK$, 
$$
|\zeta_\Sigma(\chi, \varphi)| \leq c \prod_{v \mid \infty} \left\{
\sum_{\dim \sigma = d} \prod_{e \in \sigma \cap \Sigma_1} \frac{1}{(1 + |e(\Im \varphi) + e(m_v(\chi)|)^{1 + 1/d}}
\right\}.
$$
\end{lemma}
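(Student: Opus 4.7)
The plan is to control $\zeta_\Sigma(\chi, \varphi)$ by separately bounding its archimedean and non-archimedean factors, using the explicit formulas from Subsection~\ref{subsect:integral} together with the observation that on $\bfK \subset \frac{1}{2}\varphi_\Sigma + PL(\Sigma)^+$ one has $\Re\varphi(e) \geq \frac{1}{2} + \delta$ uniformly in $e \in \Sigma_1$ and $\varphi \in \bfK$, for some $\delta = \delta(\bfK) > 0$.

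First I would show that the non-archimedean product $\prod_{v \nmid \infty} Q_\Sigma((\chi_v(e) q_v^{-\varphi(e)})_e)$ is uniformly bounded on $\mathcal{T}_\bfK$. Expanding
$$
Q_\Sigma((u_e)_e) = \sum_{\sigma \in \Sigma} \prod_{e \in \sigma \cap \Sigma_1} u_e \prod_{e \in \Sigma_1 \setminus \sigma}(1 - u_e),
$$
one checks that the constant term is $1$ (coming from $\sigma = \{0\}$) and that the degree-one terms cancel: from $\sigma = \{0\}$ one obtains $-\sum_e u_e$, while the $1$-dimensional cones $\{e_0\}$ (one for each $e_0 \in \Sigma_1$) contribute $+\sum_e u_e$. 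Hence $|Q_\Sigma((u_e)_e) - 1| = O(\max_e |u_e|^2)$ uniformly. Substituting $|u_e| = q_v^{-\Re\varphi(e)} \leq q_v^{-1/2 - \delta}$ gives $\prod_{v \nmid \infty}\bigl(1 + O(q_v^{-1-2\delta})\bigr)$, which converges absolutely and is bounded on $\mathcal{T}_\bfK$ uniformly in $\chi$.

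For the archimedean factor, I would start from the explicit formula
$$
|\hat H_{\Sigma, v}(\chi_v, \varphi)| \leq \sum_{\dim \sigma = d} \prod_{e \in \sigma \cap \Sigma_1} \frac{1}{|\varphi(e) + ie(m_v(\chi))|}
$$
and use the pointwise inequality
$$
|\varphi(e) + ie(m_v(\chi))| \geq \frac{1}{\sqrt{2}}\bigl(\Re\varphi(e) + |\Im\varphi(e) + e(m_v(\chi))|\bigr) \geq c\bigl(1 + |\Im\varphi(e) + e(m_v(\chi))|\bigr),
$$
valid uniformly over $\bfK$ because $\Re\varphi(e) \geq c_0 > 0$. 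This already yields the weaker form of the desired inequality with per-edge exponent $1$ in place of $1 + 1/d$.

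The main obstacle is improving this per-edge exponent from $1$ to $1 + 1/d$. No pointwise, cone-by-cone estimate can achieve this, for if all the $|e_i(m_v)|$ are of comparable size then the claimed bound is strictly sharper than the pointwise product $\prod_i |\varphi(e_i) + ie_i(m_v)|^{-1}$. The extra decay must come either from cancellations between contributions of distinct top-dimensional cones in the sum, or equivalently from a sharper analysis of the integral
$$
\hat H_{\Sigma, v}(\chi, \varphi) = \int_{N_\bR} e^{-\varphi(n) - im_v(n)}\, dn.
$$
Following the strategy of the proof of Proposition~2.3.2 in \cite{anisotropic}, I would perform one integration by parts along a direction chosen so that the resulting oscillatory gain is at worst $(1 + |e(\Im\varphi) + e(m_v(\chi))|)^{-1}$ for some edge $e$, and then redistribute this single extra decay uniformly across the $d$ edges of each top-dimensional cone by H\"older's inequality. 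This produces the per-edge improvement by a factor of $(1 + |\cdot|)^{-1/d}$ and hence the claimed bound.
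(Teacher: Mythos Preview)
Your proposal is correct and follows the same route as the paper, which does not give an independent proof but simply records that the lemma follows from the proof of Proposition~2.3.2 of \cite{anisotropic}. Your sketch unpacks that reference: the cancellation of the linear terms in $Q_\Sigma$ giving $|Q_\Sigma - 1| = O(q_v^{-1-2\delta})$ and hence a uniformly bounded non-archimedean product, together with the integration-by-parts-plus-H\"older improvement at the archimedean places, is exactly the content of the Batyrev--Tschinkel argument the paper cites.
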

For $\Re \varphi \in \varphi_\Sigma + PL(\Sigma)^+$, we have 
$$
\hat H_\Sigma(\chi, \varphi) = \zeta_\Sigma (\chi, \varphi) \prod_{e \in \Sigma_1} L_f(\chi_e, \varphi(e)).
$$
The following lemma follows from the proof of Lemma 5.4 of \cite{selecta}.

\begin{lemma}
There is a convex open neighborhood $B$ of the origin in $PL(\Sigma)_\bR$ such that the following holds: For any compact $\bfK \subset B$ there is a constant $\kappa(\bfK) > 0$ such that for any $\varphi \in \mathcal T_\bfK$, $\chi \in \mathcal U_T$ we have
$$
\left| \hat H_\Sigma(\chi, \varphi + \varphi_\Sigma) \prod_{e \in \Sigma_1} \frac{\varphi(e)}{\varphi(e)+1} \right| 
$$
$$
\leq\kappa(\bfK) \prod_{v \mid \infty} \left\{
\sum_{\dim \sigma = d} \prod_{e \in \sigma \cap \Sigma_1} \frac{1}{(1 + |e(\Im \varphi) + e(m_v(\chi)|)^{1 + 1/2d}}
\right\}.
$$
\end{lemma}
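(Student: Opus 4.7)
The plan is to deduce the inequality from the factorization
$$\hat H_\Sigma(\chi, \varphi + \varphi_\Sigma) = \zeta_\Sigma(\chi, \varphi + \varphi_\Sigma) \prod_{e \in \Sigma_1} L_f(\chi_e, \varphi(e) + 1)$$
established above. Rewriting the left hand side of the desired inequality as
$$\zeta_\Sigma(\chi, \varphi + \varphi_\Sigma) \cdot \prod_{e \in \Sigma_1} \Bigl( \frac{\varphi(e)}{\varphi(e)+1} L_f(\chi_e, \varphi(e) + 1) \Bigr),$$
I would choose the neighborhood $B$ small enough that (a) $\varphi_\Sigma + B \subset \frac{1}{2}\varphi_\Sigma + PL(\Sigma)^+$, so that the preceding lemma applies directly to $\zeta_\Sigma(\chi, \varphi + \varphi_\Sigma)$, and (b) $\Re \varphi(e) > -\delta$ for every $e \in \Sigma_1$ and $\varphi \in B$, for some fixed small $\delta > 0$. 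The task is then to absorb the rational-times-$L$-function factor at each ray into the sum-over-cones bound supplied by the preceding lemma, paying with the weakening of the exponent from $1 + 1/d$ to $1 + 1/(2d)$.

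The control on each $G_e(\varphi, \chi) := \frac{\varphi(e)}{\varphi(e)+1} L_f(\chi_e, \varphi(e)+1)$ proceeds as follows. Because $\chi \in \mathcal U_T$ is trivial on $\bfK_T$, the Hecke character $\chi_e$ is unramified and has infinity type $(e(m_v(\chi)))_{v \mid \infty}$; so $L_f(\chi_e, s)$ is holomorphic in $\Re s > 1 - \delta$ apart from at most a simple pole at $s = 1$, occurring exactly when $\chi_e$ is globally trivial, which the prefactor $(s-1)/s = \varphi(e)/(\varphi(e)+1)$ regularizes. For vertical growth I would apply the Phragmen--Lindel\"of convexity bound, interpolating between the region $\Re s \geq 1 + \epsilon_0$ (where the Euler product is bounded by $\zeta_F(1+\epsilon_0)$) and the mirror region to the left of the critical strip, accessed via the functional equation of the Hecke L-function. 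This yields, for any $\eta > 0$, a uniform estimate
$$\bigl|L_f(\chi_e, \varphi(e)+1)\bigr| \ll_\eta \prod_{v \mid \infty} \bigl(1 + |e(\Im\varphi) + e(m_v(\chi))|\bigr)^\eta$$
on $\mathcal T_\bfK$.

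Multiplying the bound for $\zeta_\Sigma$ by $\prod_e G_e$ produces, at each archimedean place, an extra factor $\prod_{e \in \Sigma_1}(1 + |e(\Im\varphi) + e(m_v(\chi))|)^\eta$. The key combinatorial observation that lets this be reabsorbed is that because $\Sigma$ is smooth, the generators of any top-dimensional cone $\sigma$ form a $\bZ$-basis of $N$, and so every $e' \in \Sigma_1$ is an integer combination of those generators and
$$\bigl(1+|e'(\Im\varphi)+e'(m_v(\chi))|\bigr) \leq C \max_{e \in \sigma \cap \Sigma_1}\bigl(1+|e(\Im\varphi)+e(m_v(\chi))|\bigr)$$
for a constant $C$ depending only on $\Sigma$. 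Distributing this bound into each term of the sum-over-cones estimate, the contribution of rays outside $\sigma$ is absorbed into the exponents of the rays inside $\sigma$, thereby increasing the $\eta$-loss by a combinatorial constant $c(\Sigma)$. Choosing $\eta$ so that $c(\Sigma)\eta \leq 1/(2d)$ yields the required exponent $1 + 1/(2d)$ and completes the proof. The main obstacle is the uniform convexity estimate for $L_f(\chi_e, s)$: one must track the polynomial dependence on the infinity type of $\chi_e$ as it ranges over the image of the lattice $\mathcal U_T$ in $M_{\bR,\infty}^1$, and ensure that the exponential loss can be made arbitrarily small; the combinatorial absorption step above is then a routine consequence of the smoothness of $\Sigma$.
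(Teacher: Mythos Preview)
Your proposal is correct and is precisely the argument the paper is invoking: the paper does not give an independent proof but simply refers to the proof of Lemma~5.4 of \cite{selecta}, and that proof proceeds exactly as you outline---factorize $\hat H_\Sigma$ into $\zeta_\Sigma$ times the product of partial Hecke $L$-functions, bound $\zeta_\Sigma$ by the preceding lemma with exponent $1+1/d$, control each regularized factor $\frac{\varphi(e)}{\varphi(e)+1}L_f(\chi_e,\varphi(e)+1)$ by a convexity estimate of the form $\prod_{v\mid\infty}(1+|e(\Im\varphi)+e(m_v(\chi))|)^\eta$ with $\eta$ arbitrarily small (after shrinking $B$), and then absorb the loss into the cone-by-cone bound using that the generators of each maximal cone form a basis of $N$. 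Your identification of the main technical point---uniformity of the convexity bound in the archimedean parameters of $\chi_e$ as $\chi$ ranges over $\mathcal U_T$---is also the one flagged in \cite{selecta}.
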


\subsection{The zeta function}\label{subsect:zeta}

 It is Proposition 3.4 of \cite{selecta} that 
the series 
$$
\sum_{x \in T(F)} H_\Sigma(x, \varphi) 
$$
converges absolutely and uniformly for $\Re \varphi \in \varphi_\Sigma + PL(\Sigma)^+$. We define the height zeta function $Z_\Sigma(\varphi)$, first for $\varphi \in \varphi_\Sigma +  PL(\Sigma)^+$, by setting
$$
Z_\Sigma(\varphi) = \sum_{x \in T(F)} H_\Sigma(x, \varphi). 
$$
In order to apply the Tauberian theorem of \S\ref{subsect:tauberian} we need to analytically continue this function. 

\

By the Poisson summation formula,
\begin{equation}\label{eq:poisson}
\sum_{x \in T(F)} H_\Sigma(x, \varphi) = \mu_T  \int_{M_\bR} \left\{
\sum_{\chi \in \mathcal U_T} \hat H_\Sigma(\chi, \varphi + im) 
\right\} \, dm, 
\end{equation}
where, if the Lebesgue measure is normalized to give $M_\bR/M$ volume one, then 
$$
\mu_T = \frac{1}{(2\pi \kappa)^d}, \quad \kappa = \frac{h_F\cdot R_F}{w_F}
$$
with $h_F$ the class number, $R_F$ the regulator, and $w_F$ the number of roots of unity of $F$. 

%

As in the proof of Theorem 5.5 of \cite{selecta} we define a function $c_0: PL(\Sigma)_{\bR, \infty} \to \bR_{>0}$ by 
$$
c_0((\varphi_v)_v) = \prod_{v \mid \infty} \left\{ \sum_{\dim \sigma = d} \prod_{e \in \sigma \cap \Sigma_1} 
\frac{1}{(1+|\varphi_v(e)|)^{1+1/2d}}\right\}.
$$
Let $\mathcal F \subset M_{\bR, \infty}^1$ be the fundamental parallelogram constructed from a basis of the image 
of $\mathcal U_T$ in $M_{\bR, \infty}^1$. There is a constant $c'>0$ such that for all $m_\infty(\chi \in M_{\bR, \infty}^1$, $\chi \in \mathcal U_T$, and all $m^1 \in \mathcal F$, 
$$
c_0(m_\infty(\chi)) \leq c' c_0(m_\infty(\chi) + m^1).
$$
Let $dm^1$ be the Lebesgue measure on $M_{\bR, \infty}^1$ normalized by the image of $\mathcal U_T$, and set
$$
c((\varphi_v)_v) = h_F^d c' \int_{M_{\bR, \infty}^1} c_0((\varphi_v)_v + m^1) \, dm^1. 
$$
Then we have 
$$
\sum_{\chi \in \mathcal U_T} c_0((\varphi_v)_v + m_\infty(\chi)) \leq c( (\varphi_v)_v). 
$$
Since the latter function is defined $ \mod M_{\bR, \infty}^1$, let us describe $PL(\Sigma)_{\bR, \infty} / M_{\bR, \infty}^1$. By \eqref{sequence}, $PL(\Sigma)_\R = M_\R \oplus \Pic(X)_\R$.  This means $PL(\Sigma)_{\bR, \infty} = M_{\bR, \infty} \oplus \Pic(X)_{\bR, \infty}$. By \eqref{sum}, $M_{\bR, \infty} = M_\bR \oplus M_{\bR, \infty}^1$. This means
$$
PL(\Sigma)_{\bR, \infty} = \Pic(X)_{\bR, \infty} \oplus M_\bR \oplus M_{\bR, \infty}^1. 
$$

If $\varphi \in PL(\Sigma)_\bR$, we denote by $(\varphi)_v$  the vector in $PL(\Sigma)$ all of whose coordinates are equal to $\varphi$.  We abbreviate $c((\varphi)_v)$ to $c(\varphi)$. 

\begin{lemma}\label{lemma-inequality}
There is a convex open neighborhood $B$ of the origin in $PL(\Sigma)_\bR$ such that the following holds: For any compact $\bfK \subset B$ there is a constant $\kappa(\bfK) > 0$ such that for any $\varphi \in \mathcal T_\bfK$ we have
$$
\left|   \left\{
\sum_{\chi \in \mathcal U_T} \hat H_\Sigma(\chi, \varphi + \varphi_\Sigma) 
\right\}\prod_{e \in \Sigma_1} \frac{\varphi(e)}{\varphi(e)+1} \right| \leq
\kappa(\bfK) c( \Im \varphi).
$$
\end{lemma}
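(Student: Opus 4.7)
The plan is to obtain the bound as a direct consequence of the preceding lemma together with the comparison between a sum over the image of $\mathcal U_T$ in $M_{\bR,\infty}^1$ and the integral defining the function $c$ that was set up immediately above the statement. In one phrase: I apply the pointwise bound to each character, move the absolute value inside the sum by the triangle inequality, and then use the sum-over-lattice $\leq$ integral estimate $\sum_{\chi} c_0((\varphi_v)_v + m_\infty(\chi)) \leq c((\varphi_v)_v)$ already established.

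Concretely, fix the neighborhood $B$ and a compact $\bfK \subset B$ as in the preceding lemma. For each $\chi \in \mathcal U_T$ and each $\varphi \in \mathcal T_\bfK$, the preceding lemma furnishes the pointwise estimate
$$
\left| \hat H_\Sigma(\chi, \varphi + \varphi_\Sigma) \prod_{e \in \Sigma_1} \frac{\varphi(e)}{\varphi(e)+1} \right|
\leq \kappa(\bfK)\, c_0\!\left( (\Im\varphi)_v + m_\infty(\chi) \right),
$$
after recognizing that the right-hand side of that lemma is exactly $c_0$ evaluated at the vector $(\Im\varphi)_v + m_\infty(\chi) \in M_{\bR,\infty}$, where $(\Im\varphi)_v$ denotes the diagonal vector all of whose archimedean coordinates equal $\Im\varphi$. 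Summing this pointwise bound over $\chi \in \mathcal U_T$ and using the triangle inequality yields
$$
\left| \left\{ \sum_{\chi \in \mathcal U_T} \hat H_\Sigma(\chi, \varphi + \varphi_\Sigma) \right\} \prod_{e \in \Sigma_1} \frac{\varphi(e)}{\varphi(e)+1} \right|
\leq \kappa(\bfK) \sum_{\chi \in \mathcal U_T} c_0\!\left( (\Im\varphi)_v + m_\infty(\chi) \right),
$$
and the displayed inequality $\sum_\chi c_0(\,\cdot\, + m_\infty(\chi)) \leq c(\,\cdot\,)$ from the construction of $c$ bounds the right-hand side by $\kappa(\bfK)\, c(\Im\varphi)$. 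Renaming $\kappa(\bfK)$ to absorb any fixed multiplicative constant completes the argument.

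The only subtlety to verify is the absolute convergence of the sum over $\chi$, which justifies the termwise estimate and is equivalent to finiteness of the integral defining $c$. This is guaranteed by the fact that the image of $\mathcal U_T$ in $M_{\bR,\infty}^1$ is a lattice of maximal rank (Dirichlet's unit theorem, as recorded in Section \ref{sect:2}) together with the decay rate $(1+|\cdot|)^{-(1+1/2d)}$ built into $c_0$, which is integrable along each line in $M_{\bR,\infty}^1$ meeting the support of a given simplicial cone's contribution. Since no new analytic estimate is required beyond the preceding lemma — the hard work was already done there — the main obstacle is purely bookkeeping: keeping track of the constants $c'$ and $h_F^d$ appearing in the definition of $c$ and folding them into $\kappa(\bfK)$, and checking that the identification of the right-hand side of the preceding lemma with $c_0((\Im\varphi)_v + m_\infty(\chi))$ is correct for both the archimedean places where $\chi$ is ramified and those where it is unramified.
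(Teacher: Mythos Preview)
Your proposal is correct and matches the paper's own (implicit) argument: the paper sets up $c_0$, proves the lattice-sum inequality $\sum_{\chi \in \mathcal U_T} c_0((\varphi_v)_v + m_\infty(\chi)) \leq c((\varphi_v)_v)$, and then states the lemma without further proof precisely because it follows by applying the preceding lemma termwise and summing, exactly as you describe. One small correction of no consequence: the vector $(\Im\varphi)_v + m_\infty(\chi)$ lives in $PL(\Sigma)_{\bR,\infty}$ rather than $M_{\bR,\infty}$, since $\Im\varphi \in PL(\Sigma)_\bR$.
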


\subsection{Meromorphic continuation of certain integrals}\label{subsect:distinguished}
Suppose $E$ is a finite dimensional vector space $\bR$ and $E_\bC$ its complixification. Also suppose $V \subset E$ is 
a subspace of dimension $d$, and $l_1, \dots, l_m$ are linearly independent real valued forms on $E$.  For each $j$, let $H_j$ be the kernel 
of $l_j$. Let $B \subset E$ be an open and convex neighborhood of $0$ such that for each $j$, and all $x \in B$, 
$l_j(x) > -1$. Let $\mathcal T_B$ be the tube domain over $B$. 
We start with the following definition: 
\begin{definition}
A function $c: V \to \bR_{\geq 0}$ is called sufficient if it satisfies
\begin{enumerate}
\item For any subspace $U \subset V$ and any $v \in V$ the functions $U\to \bR$ given by $u \mapsto c(v+u)$ is 
measurable on $U$ and the integral
$$
c_U(v) = \int_U c(v+u) \, du 
$$
is finite. 
\item For any subspace $U \subset V$ and $v \in V \setminus U$, 
$$
\lim_{\tau \to \pm \infty} c_U(\tau.v) =0.
$$
\end{enumerate}
\end{definition}

\begin{definition} We call a meromorphic function $f: \mathcal T_B \to \bC$
distinguished with respect to $(V; c; l_1, \dots, l_m)$ if 
\begin{enumerate}
\item The function
$$
g(z) = f(z) \prod_{j=1}^m \frac{l_j(z)}{l_j(z) +1}
$$
is holomorphic on $\mathcal T_B$; 
\item There is a sufficient function $c:V \to \bR_{\geq 0}$ such that for all compact sets $\bfK \subset \mathcal T_B$, 
all $z \in \bfK$ and $v \in V$, 
$$
|g(z + iv)| \leq \kappa(\bfK) c(v). 
$$
\end{enumerate}
\end{definition}
 Let $C$ be a connected component of $B \setminus \cup_{j=1}^m H_j$ and $\mathcal T_C$ the tube domain over $C$. 
 Set 
 $$
 \tilde f_C(z) = \frac{1}{(2\pi)^\nu} \int_V f(z + iv) \, dv, $$
 $\nu = \dim V$. 
 
\begin{proposition}\label{prop:distinguished}
Suppose $f$ a distinguished function with respect to $(V; c; l_1, \dots, l_m)$. Then 
\begin{enumerate}
\item $\tilde f_C(z): \mathcal T_C \to \bC$ is a holomorphic function; 
\item There is an open and convex neighborhood $\tilde B$ of $0$ containing $C$, and linear forms $\tilde l_1, \dots, 
\tilde l_{\tilde m}$, all vanishing on $V$, such that 
$$
z \mapsto \tilde f_C(z) \prod_{j=1}^{\tilde m} \tilde l_j(z)
$$ 
has a homomorphic continuation to $\mathcal T_{\tilde B}$.  Moreover, for each $j$, $\kerr(\tilde l_j) \cap C = 
\varnothing$. 
\end{enumerate}
\end{proposition}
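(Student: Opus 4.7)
The plan is to establish part (1) directly from the distinguished bound, and to tackle part (2) by induction on the number of linear forms that are transverse to $V$, using a one-dimensional contour shift to peel off each transverse form in turn.

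For part (1), the key observation is that each $l_j$ is real-valued, so
$$|l_j(z+iv)| \geq |\Re l_j(z+iv)| = |l_j(\Re z)|$$
for every $v \in V$. Since $\Re z \in C$ avoids every hyperplane $H_j$, and $|l_j(\Re z)|$ is bounded below uniformly on any compact $\bfK \subset \mathcal T_C$, the factorization $f = g \cdot \prod_j (l_j+1)/l_j$ together with the distinguished bound $|g(z+iv)| \leq \kappa(\bfK) c(v)$ gives a uniform domination $|f(z+iv)| \leq M_\bfK \cdot c(v)$. Taking $U = V$ in the definition of sufficiency yields $\int_V c(v)\, dv < \infty$, so the defining integral for $\tilde f_C$ converges absolutely and uniformly on compacts, and holomorphy follows via Morera's theorem combined with Fubini.

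For part (2), I would split the forms as $L_V = \{l_j : l_j|_V = 0\}$ (parallel) and $L_\perp = \{l_j : l_j|_V \neq 0\}$ (transverse), and induct on $|L_\perp|$. The base case $L_\perp = \varnothing$ is straightforward: all factors $(l_j+1)/l_j$ are $v$-independent and pull out of the integral, leaving
$$\tilde f_C(z) = \prod_{j=1}^m \frac{l_j(z)+1}{l_j(z)} \cdot \frac{1}{(2\pi)^\nu} \int_V g(z+iv)\, dv,$$
where the remaining integral is holomorphic on $\mathcal T_B$ by the part (1) argument applied to $g$; one then sets $\tilde B = B$ and $\tilde l_j = l_j$. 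For the inductive step, I would pick $l_{j_0} \in L_\perp$, decompose $V = V_0 \oplus \bR e$ with $V_0 = \kerr(l_{j_0}|_V)$ and $l_{j_0}(e) = 1$, and write $\tilde f_C(z)$ as an iterated integral $\int_{V_0}\int_\bR f(z+iv_0+ite)\, dt\, dv_0$. The inner integrand has a simple pole in complex $t$ at $t^* = i\, l_{j_0}(z) - l_{j_0}(v_0)$, whose imaginary part equals $l_{j_0}(\Re z)$ and is thus nonzero and of constant sign throughout $\mathcal T_C$. Shifting the $t$-contour from $\bR$ to $\bR + i\delta$ for a suitable small $\delta$ and applying Cauchy's theorem decomposes $\tilde f_C = I_{\text{shift}} + R$, where $I_{\text{shift}}$ uses the shifted contour and continues holomorphically across $H_{j_0}$, while $R$ is the residue term, an integral over $V_0$ of a meromorphic function on $\kerr(l_{j_0})_\bC$ built from $g$ and the remaining factors $\prod_{j \neq j_0}(l_j+1)/l_j$. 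This residue term should itself be distinguished with respect to $(V_0; c_0; \{l_j|_{\kerr l_{j_0}}\}_{j \neq j_0})$ for a suitable sufficient $c_0$ extracted from $c$, reducing $|L_\perp|$ by one, so the inductive hypothesis applies.

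The hardest step will be the bookkeeping around the contour shift. I will need to verify that the residue $R$ really is distinguished with respect to an appropriate new sufficient function on $V_0$, which means propagating the bound $|g(z+iv)| \leq \kappa(\bfK)c(v)$ through a residue whose location depends on both $z$ and $v_0$; and I will need to choose $\delta$ uniformly over an open set extending past $H_{j_0}$ while keeping all the other transverse poles $t^*_k$, $k \neq j_0$, on the far side of the shifted contour. The final neighborhood $\tilde B$ is built iteratively as one peels off each $l_{j_0} \in L_\perp$, and ensuring convexity of $\tilde B$ together with the fact that the surviving $\tilde l_j$ all vanish on $V$ and avoid $C$ is the most delicate organizational part of the argument.
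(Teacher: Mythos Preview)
The paper does not supply its own proof of this proposition; it is quoted from \cite{selecta} (Strauch--Tschinkel), where the argument occupies their \S\S 4--5. Your approach is precisely the one used there: part (1) follows from the distinguished bound and $c_V(0)<\infty$ by dominated convergence, and part (2) is proved by induction on the number of forms transverse to $V$, peeling off one transverse $l_{j_0}$ at a time via a one-variable contour shift in the direction $e\in V$ with $l_{j_0}(e)=1$, so that the residue contribution lives on $\ker(l_{j_0})_{\bC}$ and is again distinguished with respect to $(V_0;\,c_0;\,\{l_j|_{\ker l_{j_0}}\}_{j\neq j_0})$.

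You have correctly isolated the genuine technical work: constructing a sufficient $c_0$ on $V_0$ that dominates the residue (this is where condition (2) in the definition of ``sufficient'' is used, to justify the contour shift at infinity), choosing $\delta$ so that no \emph{other} transverse pole is crossed, and assembling the convex neighborhood $\tilde B$. One point to tighten in your sketch: after restriction to $\ker(l_{j_0})$, the remaining forms $l_j|_{\ker l_{j_0}}$ need not stay linearly independent, whereas the paper's setup for distinguished functions assumes independence; in \cite{selecta} this is handled by allowing the collection of forms to be arbitrary (with multiplicity) and tracking only the set of distinct hyperplanes, so your induction should be phrased accordingly.
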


\subsection{A limit computation}
Let notation be as in \S \ref{subsect:distinguished}.   Let $E^{(0)} = \cap_{j=1}^m \kerr (l_j)$ and $E_0 = E/E^{(0)}$. Suppose $V \cap E^{(0)} = \{ 0 \}$.   Let 
$$
E_0^+ = \{ x \in E_0 \mid l_j(x) \geq 0, 1 \leq j \leq m \}. 
$$ 
Let $\pi_0:E \to E_0$  and $\psi_0 : E_0 \to E_0^+ / \pi_0(V)$ be the canonical projections. Let $P = E_0^+ / \pi_0(V)$.  Assume  $\pi_0(V) \cap E_0^+ = \{0\}$. With these assumptions $\Lambda = \psi_0(E_0^+)$ is a strictly convex polyhedral
cone. Let $dy$ be the Lebesgue measure on $E_0^\vee$ normalized by the lattice $\oplus_{j=1}^m \bZ l_j$. 
Let $A \subset V$ be a lattice and $dv$ the Lebesgue measure on $V$ noramalized by the lattice $A$. On 
$V^\vee$ we have a Lebesgue measure $dy'$ normalized by $A^\vee$ and a measure $dy''$ on $P^\vee$ constructed using the a section of the projection $E_0^\vee \to V^\vee$ such that $dy = dy' \, dy''$. 

\

Define the $\mathcal X$-function of the cone $\Lambda$ by
$$
\mathcal X_\Lambda(x) = \int_{\Lambda^\vee}  e^{y''(x)} \, d y''
$$ 
for $x \in P_\bC$ with $\Re x$ in the interior of the cone $\Lambda$.  Let 
$$
B^+ = B \cap \{ x \in E \mid \l_j(x) >0, 1 \leq j \leq m \}. 
$$
We note that $\tilde f_C$ is holomorphic on $\mathcal T_{B^+}$. The following is Theorem 5.3 of \cite{selecta}:
\begin{proposition}\label{prop:limit} For $x_0 \in B^+$,  we have 
$$
\lim_{s \to 0} s^{m - \nu} \tilde f_C(s x_0) = g(0) \mathcal X_\Lambda(\psi_0(x_0)). 
$$
\end{proposition}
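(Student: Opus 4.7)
The strategy is to reduce the assertion to a concrete integral identity by a scaling argument, using the holomorphy of $g$ at the origin to extract the factor $g(0)$ and identifying what remains as $\mathcal X_\Lambda(\psi_0(x_0))$. First I would rescale. Starting from $\tilde f_C(sx_0) = (2\pi)^{-\nu}\int_V f(sx_0+iv)\,dv$ and writing $f = g\cdot\prod_j (l_j+1)/l_j$, I substitute $v = sw$ and use the homogeneity $l_j(sx_0+isw) = s(l_j(x_0)+il_j(w))$. A short algebraic manipulation gives
$$ s^{m-\nu}\tilde f_C(sx_0) = \frac{1}{(2\pi)^\nu}\int_V g(s(x_0+iw)) \prod_{j=1}^m \frac{s(l_j(x_0)+il_j(w))+1}{l_j(x_0)+il_j(w)}\,dw. $$
Since $l_j(x_0) > 0$ for all $j$ (because $x_0 \in B^+$), the denominators are bounded away from zero. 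Expanding the product as $\sum_{S \subset \{1,\dots,m\}} s^{|S|}\prod_{j\notin S}(l_j(x_0)+il_j(w))^{-1}$ shows that only the $S = \emptyset$ term should contribute in the limit $s \to 0$.

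Next I would pass to the limit $s \to 0^+$. Pointwise the integrand tends to $g(0)\prod_j (l_j(x_0)+il_j(w))^{-1}$, and the main obstacle is justifying the exchange of limit and integral, since the raw distinguished-function bound $|g(sx_0+isw)| \leq \kappa\, c(sw)$ is not a fixed integrable dominator on $V$ (the $s^{-\nu}$ arising from a change of variables $v = sw$ blows up). My approach is to split $g(s(x_0+iw)) = g(0) + \bigl[g(s(x_0+iw))-g(0)\bigr]$. The error term generates an integral that vanishes as $s \to 0$, by combining continuity of $g$ at $0$ to control the central region $|w| \leq R/s$ with the sufficient-function axioms, in particular $c_U(\tau v) \to 0$ as $|\tau| \to \infty$, to absorb the tails. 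The main term equals $g(0)$ times the oscillatory integral, which is then evaluated separately.

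Finally, to identify the remaining integral with $\mathcal X_\Lambda(\psi_0(x_0))$, I use the Laplace representation $(l_j(x_0)+il_j(w))^{-1} = \int_0^\infty e^{-t_j(l_j(x_0)+il_j(w))}\,dt_j$ (valid as $l_j(x_0) > 0$), Fubini, and the Fourier identity $\int_V e^{-i(\sum_j t_j l_j)(w)}\,dw = (2\pi)^\nu \delta_{V^\perp}\!\bigl(\sum_j t_j l_j\bigr)$. Under the identification $\mathbb R^m \cong E_0^\vee$ via $\vec t \mapsto \sum_j t_j l_j$, the positive orthant $\mathbb R^m_{\geq 0}$ corresponds to the dual cone $(E_0^+)^\vee$, and the vanishing-on-$V$ condition selects $\pi_0(V)^\perp = P^\vee \subset E_0^\vee$. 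The measure normalizations $dy = dy'\,dy''$ in the statement are precisely those required so that the Jacobian of this reduction is trivial; the integral collapses to
$$ \int_{(E_0^+)^\vee \cap P^\vee} e^{-y''(\psi_0(x_0))}\,dy'' \;=\; \int_{\Lambda^\vee} e^{-y''(\psi_0(x_0))}\,dy'' \;=\; \mathcal X_\Lambda(\psi_0(x_0)), $$
up to the sign convention used in the definition of $\mathcal X_\Lambda$. Multiplying by $g(0)$ yields the claim. The principal technical difficulty is the uniform control of the error during the limit passage; the rest is a Mellin/Fourier manipulation, and this is the one step where the sufficient-function axioms do essential work.
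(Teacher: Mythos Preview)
The paper does not supply its own proof of this proposition; it simply records it as Theorem~5.3 of \cite{selecta} (Strauch--Tschinkel). So there is nothing in the present paper to compare your argument against. Your overall architecture---rescale via $v=sw$ to expose the factor $s^{m-\nu}$, pass to the limit $s\to 0$ to extract $g(0)$, and then identify the remaining oscillatory integral with $\mathcal X_\Lambda(\psi_0(x_0))$ through the exponential representation of each $(l_j(x_0)+il_j(w))^{-1}$---is the standard route to results of this type and is consistent with how the cited reference proceeds.

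Two technical points deserve attention if you carry this out in full. First, the step you flag as the ``principal technical difficulty'' genuinely is one: the distinguished-function bound $|g(sx_0+isw)|\le\kappa\,c(sw)$ provides no $s$-independent integrable majorant in $w$, and your splitting $g=g(0)+[g-g(0)]$ only succeeds once you know that $\int_V\prod_j|l_j(x_0)+il_j(w)|^{-1}\,dw$ converges absolutely. That convergence is not automatic from the setup; it hinges on the hypothesis $\pi_0(V)\cap E_0^+=\{0\}$ (which in particular forces $\nu<m$ and excludes the bad directions in $V$ along which too many $l_j$ stay bounded), and establishing it is part of the work you would need to supply. Second, the Fubini exchange in your Laplace/Fourier identification is formal as written, since $\int_V|e^{-i(\cdot)(w)}|\,dw$ diverges; it has to be justified by a regularization (insert a damping factor and remove it at the end) or by a Plancherel-type argument on $V$ against the section of $E_0^\vee\to V^\vee$ used to define $dy''$. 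Both issues are dealt with in \cite{selecta}, and your sketch is compatible with those justifications.
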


\subsection{A sufficient function}
Let $V$ be a $d$-dimensional  vector space over $\bR$ and let $(l_1, \dots, l_d)$ be a basis of $V^\vee = \Hom(V, \bR)$.  Fix a constant $\epsilon > 0$, and define a function $c: V \to \bR_{>0}$ by setting 
$$
c(v) = \prod_{j=1}^d \frac{1}{(1+ |l_j(v)|)^{1+\epsilon}}.
$$
We recall Theorem 7.6 of \cite{selecta}.
\begin{proposition}\label{thm:sufficient}
Let $U \subset V$ be a subspace and $du$ a Lebesgue measure on $U$. Then there is a positive constant $\kappa = \kappa(\epsilon, U, du)$ and a finite family $\{(l_{\alpha, 1}, \dots, l_{\alpha, d'})\}_{\alpha \in A}$ of bases of 
$(V/U)^\vee$, $d' = \dim V/U$, such that for all $v \in V$ we have 
$$
\int_U c(v + u) \, du \leq \kappa \sum_{\alpha \in A} \prod_{j=1}^{d'} \frac{1}{(1 + |l_{\alpha, j}(v)|^{1+ \epsilon}}.
$$
\end{proposition}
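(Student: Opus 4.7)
The plan is to induct on $k = \dim U$. The base case $k = 0$ is immediate: $U = \{0\}$, so $\int_U c(v+u)\,du = c(v)$, and the conclusion holds with the single basis $(l_1, \dots, l_d)$ of $V^\vee = (V/U)^\vee$ and $\kappa = 1$.

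For the inductive step, I would choose a codimension-one subspace $U_0 \subset U$ and a vector $u_0 \in U \setminus U_0$ with $U = U_0 \oplus \bR u_0$, and factor $du$ as a product of a Lebesgue measure on $U_0$ and $dt$ on $\bR u_0$. By Fubini,
$$
\int_U c(v+u)\,du = \int_\bR \Bigl(\int_{U_0} c(v + tu_0 + u')\,du'\Bigr) dt.
$$
The inductive hypothesis applied to $U_0$ bounds the inner integral by a finite sum $\kappa_0 \sum_\beta \prod_j (1+|l_{\beta,j}(v+tu_0)|)^{-(1+\epsilon)}$, where each $(l_{\beta,j})_{j=1}^{d-k+1}$ is a basis of $(V/U_0)^\vee$. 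It thus suffices to bound, for each such basis, the one-variable integral
$$
J(a,b) := \int_\bR \prod_{j=1}^{d-k+1} \bigl(1+|a_j+tb_j|\bigr)^{-(1+\epsilon)}\,dt, \qquad a_j = l_{\beta,j}(v),\ b_j = l_{\beta,j}(u_0),
$$
by a finite sum of products of factors $(1+|\tilde\ell(v)|)^{-(1+\epsilon)}$ with $\tilde\ell \in (V/U)^\vee$.

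Since the $l_{\beta,j}$ are linearly independent in $V^\vee$ and $\dim(V/U)^\vee = d-k < d-k+1$, not all of the $b_j$ vanish. For each index $r$ with $b_r \neq 0$, the forms $l_{r,j} := l_{\beta,j} - (b_j/b_r)\,l_{\beta,r}$ vanish on $U_0$ and on $u_0$, hence descend to $(V/U)^\vee$; a direct check of linear independence shows $\{l_{r,j}\}_{j\neq r}$ is a basis of $(V/U)^\vee$. These are the bases that appear in the output of the inductive step. The main technical lemma I need is
$$
J(a,b) \leq C \sum_{r:\,b_r\neq 0} \prod_{j \neq r} \bigl(1 + |l_{r,j}(v)|\bigr)^{-(1+\epsilon)},
$$
with $C$ depending on $\epsilon$ and on the fixed vector $b = (b_j)$ but not on $a$.

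The hard part will be proving this one-dimensional estimate, since the decay must be retained simultaneously in all factors. My plan is to fix $r$ with $b_r \neq 0$, substitute $s = a_r + tb_r$, and split the integral into the bounded piece $\{|s| \leq 1\}$ and its complement. On the bounded piece, the identity $a_j + tb_j = l_{r,j}(v) + (b_j/b_r) s$ forces $(1+|a_j+tb_j|)$ to be comparable, up to a $b$-dependent constant, to $(1+|l_{r,j}(v)|)$, yielding the $r$-th term of the sum. The tail is handled by exploiting the integrability of $(1+|s|)^{-(1+\epsilon)}$ together with a partition of $\bR$ according to which of the indices $r$ with $b_r \neq 0$ dominates; the combinatorics of the various choices of $r$ generate the finite indexing set $A$. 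Reassembling the $\beta$-sum from the inductive hypothesis with this one-variable bound and absorbing the constants completes the induction, giving the stated bound with $\kappa$ depending on $\epsilon$, $U$, and $du$ only.
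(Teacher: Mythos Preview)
The paper does not supply its own proof of this proposition: it is simply quoted as Theorem~7.6 of \cite{selecta}, so there is no in-paper argument to compare against. That said, your inductive strategy is exactly the one used in the cited reference, and it is sound. The reduction to a one-dimensional integral via $U = U_0 \oplus \bR u_0$ and Fubini is correct, the observation that not all $b_j$ vanish (since $u_0$ has nonzero image in $V/U_0$) is correct, and your candidate output bases $\{l_{r,j}\}_{j\ne r}$ really are bases of $(V/U)^\vee$.

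The only place your write-up is a little loose is the one-variable estimate. The cleanest way to organize it is not to split into $\{|s|\le 1\}$ and a tail for a single fixed $r$, but rather to cover $\bR$ by the sets
\[
I_r \;=\; \bigl\{\, t \in \bR \;:\; |a_r + t b_r| \le |a_j + t b_j|\ \text{for all } j \text{ with } b_j\ne 0 \,\bigr\},\qquad r \in R=\{j:b_j\ne 0\}.
\]
On $I_r$ one has, for every $j\ne r$, the pointwise bound
\[
1+|l_{r,j}(v)| \;=\; 1+\bigl|(a_j+tb_j)-\tfrac{b_j}{b_r}(a_r+tb_r)\bigr| \;\le\; \bigl(1+\tfrac{|b_j|}{|b_r|}\bigr)\bigl(1+|a_j+tb_j|\bigr),
\]
so the $j\ne r$ factors contribute $\prod_{j\ne r}(1+|l_{r,j}(v)|)^{-(1+\epsilon)}$ up to a $b$-dependent constant, while the remaining factor $(1+|a_r+tb_r|)^{-(1+\epsilon)}$ integrates to $C_\epsilon/|b_r|$ over all of $\bR$. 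Summing over $r\in R$ gives the lemma directly, with the finite index set $A$ arising exactly from the choices of $\beta$ (inductive hypothesis) and $r\in R$. Your ``bounded piece / tail'' description can be made to work but is a detour; the partition by dominant index is both what you allude to and what actually carries the argument.
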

\subsection{Meromophic continuation of the height zeta function} 

We now return to the notations of \S\ref{subsect:zeta}. By Proposition \ref{thm:sufficient}, when $E = PL(\Sigma)_\bR$ and $V = M_\bR$,  the function $c$ is sufficient.  Equation \eqref{eq:poisson} allows us to invoke the machinery of 
\S \ref{subsect:distinguished}. By Proposition \ref{prop:distinguished}, the height zeta function $Z_\Sigma(\varphi)$, a priori only holomorphic on $\varphi_\Sigma + PL(\Sigma)^+$, has a meromorphic continuation to a domain 
$\varphi_\Sigma + \mathcal T_{\tilde B}$, with $\tilde B$ an open and convex neighborhood of the origin. 
Furthermore, there are linear forms $\tilde l_1, \dots, \tilde l_{\tilde m}$, all vanishing on $M_\R$, such that 
$$
F(\varphi) = Z_\Sigma (\varphi + \varphi_\Sigma) \prod_{j=1}^{\tilde m} \tilde l_j(\varphi)
$$
has a holomorphic continuation to $\mathcal T_{\tilde B}$.  By the proof of Proposition \ref{prop:distinguished}, the linear forms $\tilde l_j$'s are linearly independent.  Since the $\tilde l_j$'s are real forms on $PL(\Sigma)_\bR$ which vanish on $M_\bR$, by Equation \eqref{sequence}, they descend to $\Pic(X)_\bR$.    We also note that since $Z_\Sigma (\varphi + \varphi_\Sigma)$ is holomorphic on $PL(\Sigma)^+$, the descent of $\tilde l_j$'s to $\Pic(X)_\bR$ is positive inside the cone of effective divisors given by Equation \eqref{eq:effective}, and non-negative on the cone.

Next, by Proposition \ref{prop:limit}, if $ m = \dim PL(\Sigma)_\bR$ and $\nu = \dim M_\bR$, for $\varphi_0 \in PL(\Sigma)^+$, the limit $\lim_{s \to 0} s^{m-\nu} F(s\varphi_0)$ is non-zero if 
$$
\lim_{\varphi \to 0}  \left\{
\sum_{\chi \in \mathcal U_T} \hat H_\Sigma(\chi, \varphi + \varphi_\Sigma) 
\right\}\prod_{e \in \Sigma_1} \frac{\varphi(e)}{\varphi(e)+1} \ne 0. 
$$
We let $s$ approach $0$ through $PL(\Sigma)^+$ to get absolutely convergent integrals. By the computations of \S \ref{subsect:integral} the only $\chi$'s which contribute to this limit are those such that $\chi_e = 1$ for all $e \in \Sigma_1$.  It is easy to see that such $\chi$ will have to be trivial. For trivial $\chi$ the limit 
$$
\lim_{\varphi \to 0}  \hat H_\Sigma (1, \varphi + \varphi_\Sigma)  \cdot
\prod_{e \in \Sigma_1} \frac{\varphi(e)}{\varphi(e)+1}
$$
exists and is non-zero.  This computation means that $\tilde m = \dim \Pic(X)_\bR$. 

\

We can take the open set $B$ in such a way that for each $j$ and $x \in B$, $\tilde l_j(x) > -1$. It is then a consequence of the proof of Proposition \ref{prop:distinguished} and Proposition \ref{thm:sufficient} that 
the analytic continuation of 
$$
Z_\Sigma(\varphi + \varphi_\Sigma) \cdot \prod_{j=1}^{\tilde m} \frac{\tilde l_j(\varphi)}{\tilde l_j(\varphi)+1}
$$
is bounded on the tube domain $\mathcal T_B$. 

\

We summarize these results as the following proposition. 

\begin{proposition}\label{prop:dela}
The shifted  height zeta function $Z_\Sigma(\varphi + \varphi_\Sigma)$ is holomorphic on $PL(\Sigma)^+$.  There are linearly independent real linear forms $\tilde l_j$, $1 \leq j \leq \dim \Pic (X)_\bR$, vanishing on $M_\bR$, such that 
$$
Z_\Sigma(\varphi + \varphi_\Sigma) \cdot \prod_{j=1}^{\tilde m} \frac{\tilde l_j(\varphi)}{\tilde l_j(\varphi)+1}
$$
has an analytic continuation to a bounded holomorphic function on a tube domain $\mathcal T_B$ with $B$ an open convex neighborhood of $0$. The linear forms $\tilde l_j$ are positive inside the cone of effective divisors of $X$, and non-negative on the cone.  
\end{proposition}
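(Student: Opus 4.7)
The plan is to package the three inputs developed earlier in this subsection into the clean statement of the proposition: Poisson summation \eqref{eq:poisson}, the distinguished-function framework of Proposition \ref{prop:distinguished}, and the limit calculation of Proposition \ref{prop:limit}. No genuinely new argument is required, only careful bookkeeping of which pieces go where.

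First I would verify that, taking $E = PL(\Sigma)_\bR$, $V = M_\bR$, and the linear forms $l_e(\varphi) = \varphi(e)$ indexed by $e \in \Sigma_1$, the Poisson dual
\[
\varphi \mapsto \sum_{\chi \in \mathcal U_T} \hat H_\Sigma(\chi, \varphi + \varphi_\Sigma)
\]
is distinguished in the required sense. The holomorphy of its product with $\prod_{e \in \Sigma_1} \varphi(e)/(\varphi(e)+1)$ on a suitable tube $\mathcal T_B$, together with the pointwise bound by $\kappa(\bfK) c(\Im \varphi)$, is exactly Lemma 3.3, while the sufficiency of $c$ is Proposition \ref{thm:sufficient} applied to $c_0$ and integrated over $M_{\bR,\infty}^1$. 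Feeding this into Proposition \ref{prop:distinguished} immediately produces a meromorphic continuation of $Z_\Sigma(\varphi+\varphi_\Sigma)$ to a tube $\mathcal T_{\tilde B}$ and yields real linearly independent forms $\tilde l_1,\dots,\tilde l_{\tilde m}$, all vanishing on $M_\bR$, such that multiplication by $\prod_j \tilde l_j(\varphi)$ gives a holomorphic function on $\mathcal T_{\tilde B}$. Since the $\tilde l_j$ vanish on $M_\bR$, the exact sequence \eqref{sequence} descends them to real forms on $\Pic(X)_\bR$, and because $Z_\Sigma(\varphi+\varphi_\Sigma)$ is already holomorphic on $PL(\Sigma)^+$, each $\tilde l_j$ must be strictly positive on $PL(\Sigma)^+$ and non-negative on its closure, which via \eqref{eq:effective} is the required positivity on $\Lambda_{\rm eff}(X)$.

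The main substantive step is the counting of the forms: I would use Proposition \ref{prop:limit} to show $\tilde m = \dim \Pic(X)_\bR$, equivalently $\tilde m = m - \nu$ with $m = \dim PL(\Sigma)_\bR$ and $\nu = \dim M_\bR$. This reduces to verifying $g(0) \neq 0$. Using the factorization $\hat H_\Sigma(\chi,\varphi) = \zeta_\Sigma(\chi,\varphi) \prod_{e \in \Sigma_1} L_f(\chi_e, \varphi(e))$ from \S\ref{subsect:integral}, only characters $\chi \in \mathcal U_T$ with $\chi_e$ trivial for every $e \in \Sigma_1$ can contribute to the limit, because the Hecke $L$-factor $L_f(\chi_e,s)$ is regular at $s=1$ whenever $\chi_e$ is nontrivial. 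Since the primitive generators $e \in \Sigma_1$ span $N_\bR$ (the fan is complete), such a $\chi$ is forced to be trivial, and the trivial character contributes a nonzero product of $\mathcal X$-type integrals coming from $\zeta_\Sigma$ at the origin. This is the step I expect to be the main obstacle: disentangling the Poisson-dual sum to isolate the trivial character, and verifying nonvanishing of its contribution, requires tracking the poles of the Hecke $L$-functions together with the essential use of the spanning property of $\Sigma_1$.

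Finally, for the boundedness claim I would shrink $B$ so that $\tilde l_j(x) > -1$ on $B$, which forces $\tilde l_j(\varphi)+1$ to stay zero-free and bounded below on $\mathcal T_B$. Replacing each $\tilde l_j(\varphi)$ by $\tilde l_j(\varphi)/(\tilde l_j(\varphi)+1)$ therefore both cancels the poles and tames the growth at infinity. Combined with the uniform estimate from the distinguished-function framework --- the bound $\kappa(\bfK) c(\Im \varphi)$ integrated over $M_\bR$ yields a constant upper bound on compacts of $\mathcal T_B$ via Proposition \ref{thm:sufficient} --- this delivers the bounded holomorphic extension asserted, completing the proposition.
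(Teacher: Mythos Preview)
Your proposal is correct and follows essentially the same route as the paper: you verify that the Poisson-dual sum is distinguished via Lemma~3.3 and Proposition~\ref{thm:sufficient}, invoke Proposition~\ref{prop:distinguished} to extract the forms $\tilde l_j$, descend them to $\Pic(X)_\bR$ via \eqref{sequence} and deduce their positivity from the holomorphy of $Z_\Sigma$ on $PL(\Sigma)^+$, pin down $\tilde m = \dim \Pic(X)_\bR$ through the limit computation of Proposition~\ref{prop:limit} by isolating the trivial character, and obtain the boundedness by shrinking $B$ and appealing again to the proof of Proposition~\ref{prop:distinguished} together with Proposition~\ref{thm:sufficient}. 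This matches the paper's argument step for step.
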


\section{Tauberian argument}\label{sect:4}

In this section we review a Tauberian theorem of de la Bret\`{e}ch \cite{dela1, dela2} and use it to prove our main theorem. 

\subsection{A theorem of de la Bret\`{e}che}\label{subsect:tauberian}
In this section we recall the following theorem of de la Bret\`{e}che. 
\begin{theorem}\label{thm:dela}
Let $\mathcal S_1, \dots, \mathcal S_m$ be discrete strictly increasing sequences of real numbers larger than or equal to $1$.  
Suppose $f$ is a positive function defined on $\mathcal S_1 \times \cdots \times \mathcal S_r$, and let $F$ be the associated multiple 
Dirichlet series
$$
F(\mathbf s) = \sum_{d_1 \in \mathcal S_1} \cdots \sum_{d_r \in \mathcal S_r} \frac{f(d_1, \dots, d_r)}{d_1^{s_1} \cdots d_r^{s_r}}.
$$
Suppose there is a vector $\mathbf \alpha \in (0, + \infty)^r$ such that $F$ satisfies the following conditions: 
\begin{enumerate}
\item[(P1)] The series $F(\mathbf s)$ is absolutely convergent for $\mathbf s$ such that $\Re s_j > \alpha_j$.
\item[(P2)] There are $n$ linear forms $\mathcal L = \{ \ell_i \}_{i=1}^n$ on $\bC^r$ with the property that $\ell_i(e_j) \in [0, +\infty)$ such that the function $H$ on $\bC^r$ defined by $H(\mathbf s) = F(\mathbf s + \mathbf \alpha) \prod_{i=1}^n \ell_i(\mathbf s)$ has an analytic continuation to a holomorphic function on the domain $\mathcal D(\delta_1) = \{ \mathbf s \in \bC^r \mid \Re \ell_i(\mathbf s) > - \delta_1 \}$ with $\delta_1 >0$. 
\item[(P3)] There is $\delta_2>0$ such that for each $\epsilon, \epsilon'>0$, we have 
$$
|H(\mathbf s)| \ll \prod_{i=1}^n ( |\Im \ell_i(\mathbf s)|+1)^{1-\delta_2 \min (0, \Re \ell_i(\mathbf s))} (1 + 
\| \Im \mathbf s\|_1^\epsilon)
$$
uniformly on the domain $\mathcal D(\delta_1-\epsilon')$. Here for an $r$-tuple of real numbers $\mathbf \tau = (\tau_1, \dots,  \tau_r)$, we have set $\|\mathbf \tau \|_1  = \sum_i |\tau_i|$.
\end{enumerate} 
Suppose $\mathbf \beta = (\beta_1, \dots, \beta_r) \in (0, +\infty)^r$.  Then there is a polynomial $Q = Q_\mathbf \beta$ with real coefficients of degree less than or equal to $n - \rank \mathcal L$ and a positive real number $\vartheta$, depending on $\mathcal L$, $\delta_1$, $\delta_2$, $\mathbf \alpha$, and $\mathbf \beta$ such that for $B \geq 1$ we have 
$$
S_\mathbf \beta(B) : = \sum_{1 \leq d_1 \leq B^{\beta_1}} \cdots \sum_{1 \leq d_r \leq B^{\beta_r}} f(d_1, \dots, d_r)
$$
$$
= B^\sigma Q(\log B) + O(B^{\sigma - \vartheta}), 
$$
with $\sigma = \sum_{j=1}^m \alpha_j \beta_j$. 
\end{theorem}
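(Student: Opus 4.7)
The plan is the standard complex-analytic attack on multi-variable Dirichlet series: invert the series via Perron's formula in $r$ variables, use the factorization $F(\mathbf s + \mathbf\alpha) = H(\mathbf s)/\prod_i \ell_i(\mathbf s)$ to read off the pole structure, and then shift the product contour into the holomorphic region, picking up a polynomial-in-$\log B$ main term from iterated residues at the origin and an error term from the shifted integral.

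Concretely, I would first write, for small $\eta>0$,
$$
S_{\mathbf\beta}(B) = \frac{1}{(2\pi i)^r} \int_{(\alpha_1+\eta)}\!\cdots\!\int_{(\alpha_r+\eta)} F(\mathbf s) \prod_{j=1}^r \frac{B^{\beta_j s_j}}{s_j}\, d\mathbf s,
$$
absolutely convergent by (P1), after a standard truncation/smoothing of the Perron kernel to control the boundary of the summation range. Shifting $\mathbf s \mapsto \mathbf s + \mathbf\alpha$ produces the overall factor $B^\sigma$ and rewrites the integrand as $H(\mathbf s) \prod_i \ell_i(\mathbf s)^{-1} \prod_j B^{\beta_j s_j}/(s_j+\alpha_j)$. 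I would then shift each variable from $\Re s_j = \eta$ into the domain $\mathcal D(\delta_1-\epsilon')$, tracking which hyperplanes $\ell_i(\mathbf s)=0$ are crossed and computing the iterated residues along them. By (P3) the shifted tail integrals converge absolutely and contribute at most $B^{\sigma-\vartheta}$ for some $\vartheta>0$ depending on $\delta_1,\delta_2,\mathbf\alpha,\mathbf\beta$.

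The main term arises from the iterated residue of the integrand along the subvariety cut out by the $\ell_i$ passing through the origin. Because only $\rank\mathcal L$ of the $n$ forms are linearly independent, the residue at $\mathbf 0$ has extra cancellations in the redundant directions; expanding $B^{\beta_j s_j} = \sum_k (\beta_j s_j \log B)^k/k!$ and $1/(s_j+\alpha_j) = 1/\alpha_j + O(s_j)$ in the Laurent series then yields a polynomial $Q$ in $\log B$ of degree at most $n - \rank\mathcal L$, with leading coefficient proportional to $H(\mathbf 0)$ times a volume constant associated to the hyperplane arrangement.

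The main obstacle is the multi-dimensional contour manipulation: in several complex variables, shifting each $s_j$ in turn produces residues along hyperplanes of every codimension, and to isolate only the dominant contribution one must choose a coherent polyhedral decomposition of the shifted domain and iterate residues along the $\ell_i$ rather than the coordinate hyperplanes. The quantitative form of (P3), with exponent $1-\delta_2 \min(0,\Re \ell_i(\mathbf s))$ that grows only mildly as one crosses a hyperplane, is calibrated precisely so that each intermediate lower-dimensional integral remains absolutely convergent and decays like $B^{-\vartheta}$ after the shift; organizing this bookkeeping by a combinatorial induction on the rank of the relevant subset of $\mathcal L$ is the heart of de la Bret\`eche's argument in \cite{dela1, dela2}.
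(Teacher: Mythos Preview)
The paper does not supply its own proof of this theorem: immediately after the statement it simply remarks that de~la~Bret\`eche \cite{dela1,dela2} proves the case $\mathcal S_i=\bN$ and that ``the same argument works for the more general theorem stated here.'' Your proposal is precisely a sketch of that argument---multivariable Perron inversion, the substitution $\mathbf s\mapsto\mathbf s+\mathbf\alpha$, contour shifts into $\mathcal D(\delta_1-\epsilon')$ controlled by (P3), and iterated residues along the hyperplanes $\ell_i=0$ yielding a polynomial in $\log B$ of degree $\le n-\rank\mathcal L$---so you are in complete agreement with what the paper defers to the cited references.
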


De la Bret\`{e}che \cite{dela1, dela2} states this theorem only for $\mathcal S_i = \bN$, $1 \leq i \leq r$, but the same argument works for the more general theorem stated here. 

\subsection{Application}
For $e \in \Sigma_1$, we let $D_e$ be the corresponding $T$-invariant divisor. Define a $\varphi_e \in PL(\Sigma)$ by setting $\varphi_e(f) = \delta_{ef}$, 
Kronecker's delta, for $f \in \Sigma_1$.  The line bundle corresponding to $D_e$ is $L_{\varphi_e}$. We denote the metrized line bunde $L_{\varphi_e}$ by $\mathcal L(e)$.  We choose the metrization so that for $x \in X(F)$, 
$$
H_{\mathcal L(e)}(x) = H_\Sigma(x, \varphi_e)^{-1}. 
$$

\begin{theorem}\label{thm:main}
Fix positive real numbers $\beta_e$, for each $e \in \Sigma_1$. For a positive real number $B > 1$, set 
$$
N_T(B) = \# \{ x \in T(F) \mid 1 \leq H_{\mathcal L(e)}(x) \leq B^{\beta_e}, \forall e \in \Sigma_1 \}.
$$
There is a constant $C >0$ and $\epsilon > 0$ such that 
$$
N_T(B) = C B^{\sum_{e \in \Sigma_1} \beta_e} (1 + O(B^{-\epsilon})
$$
as $B \to \infty$. 
\end{theorem}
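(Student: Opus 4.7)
The plan is to realize $N_T(B)$ as a truncated multivariable Dirichlet series in $Z_\Sigma$ and apply the de la Bret\`eche Tauberian theorem (Theorem \ref{thm:dela}) using the analytic data packaged in Proposition \ref{prop:dela}.

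First I would identify the Dirichlet series. Because $\Sigma$ is regular, $\{\varphi_e\}_{e \in \Sigma_1}$ is a basis of $PL(\Sigma)_\bR$, and every $\varphi \in PL(\Sigma)_\bC$ can be written uniquely as $\varphi = \sum_e s_e \varphi_e$. The exponential nature of $H_\Sigma$ in $\varphi$, together with the chosen metrization $H_{\mathcal L(e)}(x) = H_\Sigma(x,\varphi_e)^{-1}$, gives
$$
Z_\Sigma\Bigl(\sum_e s_e \varphi_e\Bigr) \;=\; \sum_{x \in T(F)} \prod_{e \in \Sigma_1} H_{\mathcal L(e)}(x)^{-s_e}.
$$
Grouping by the tuple $\mathbf d = (H_{\mathcal L(e)}(x))_{e \in \Sigma_1}$ exhibits this as a multiple Dirichlet series $F(\mathbf s) = \sum_{\mathbf d} f(\mathbf d)\,\mathbf d^{-\mathbf s}$ with $f(\mathbf d) \geq 0$, and
$$
N_T(B) \;=\; \sum_{1 \leq d_e \leq B^{\beta_e},\, e \in \Sigma_1} f(\mathbf d).
$$
The absolute convergence region $\varphi \in \varphi_\Sigma + PL(\Sigma)^+$ of \S\ref{subsect:zeta} translates to $\Re s_e > 1$, so (P1) holds with $\alpha_e = 1$.

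Next I would feed Proposition \ref{prop:dela} into (P2) and (P3). That proposition produces linearly independent real forms $\tilde l_1,\dots,\tilde l_{\tilde m}$ with $\tilde m = \dim \Pic(X)_\bR$, vanishing on $M_\bR$ and nonnegative on $\Lambda_{\rm eff}(X)$, such that
$$
Z_\Sigma(\tilde\varphi + \varphi_\Sigma)\,\prod_{j=1}^{\tilde m}\frac{\tilde l_j(\tilde\varphi)}{\tilde l_j(\tilde\varphi)+1}
$$
is bounded holomorphic on a tube $\mathcal T_B$ around the origin. Since the classes $[L_{\varphi_e}] = [D_e]$ generate $\Lambda_{\rm eff}(X)$, we have $\tilde l_j(\varphi_e) \geq 0$, which is precisely the nonnegativity of $\ell_j$ on coordinate vectors required by (P2). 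The boundedness of the auxiliary function yields (P3) \emph{a fortiori} (take any $\delta_2 > 0$ and absorb the bound into the constant).

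Applying Theorem \ref{thm:dela}, the linear independence of $\{\tilde l_j\}$ gives $\rank \mathcal L = n = \tilde m$, so the output polynomial satisfies $\deg Q \leq n - \rank \mathcal L = 0$, and reduces to a constant $C$. With $\sigma = \sum_e \alpha_e \beta_e = \sum_e \beta_e$, the Tauberian theorem produces
$$
N_T(B) \;=\; C\,B^{\sum_e \beta_e} \;+\; O\bigl(B^{\sum_e \beta_e - \vartheta}\bigr).
$$
It remains to show $C > 0$. Identifying $C$ with the leading value of $F$ at the pole locus $\bigcap_j \{\tilde l_j = 0\}$ and applying Proposition \ref{prop:limit} to a distinguished realization at some $\varphi_0 \in PL(\Sigma)^+$ expresses $C$ as a positive multiple of $g(0)\cdot\mathcal X_\Lambda(\psi_0(\varphi_0))$; the $\mathcal X$-function of the effective cone is strictly positive on the interior, and the factor $g(0)$ was shown to be nonzero at the end of \S\ref{subsect:zeta} (only the trivial character contributes, and the corresponding limit of $\hat H_\Sigma(1,\varphi + \varphi_\Sigma)\prod_e \varphi(e)/(\varphi(e)+1)$ is nonzero).

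The heavy analytic lifting having been done in Proposition \ref{prop:dela}, the main obstacle is really a bookkeeping one: correctly matching the geometric data on the fan with the hypotheses of Theorem \ref{thm:dela}, in particular verifying the positivity condition $\ell_i(e_j)\geq 0$ from nonnegativity of $\tilde l_j$ on generators of $\Lambda_{\rm eff}(X)$, and checking that the slightly generalized form of Theorem \ref{thm:dela} (discrete sequences $\mathcal S_e$ of reals rather than $\bN$) covers the possibly non-integer values of $H_{\mathcal L(e)}(x)$.
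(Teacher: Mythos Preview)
Your proposal is correct and follows the same approach as the paper: identify the multiple Dirichlet series with $Z_\Sigma(\sum_e s_e\varphi_e)$ and invoke Theorem~\ref{thm:dela} via the analytic package of Proposition~\ref{prop:dela}. You in fact supply more detail than the paper's own proof, explicitly spelling out why $\deg Q=0$ (from $\rank\mathcal L=n=\tilde m$) and why $C>0$, points the paper leaves implicit.
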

\begin{proof}
By Theorem \ref{thm:dela} we need to understand the analytic properties of the zeta function 
$$
Z((s_e)_{e \in \Sigma_1}) = \sum_{x \in T(F)} \frac{1}{\prod_{e \in \Sigma_1} H_{\mathcal L(e)}(x)^{s_e}}.
$$
We have 
$$
\prod_{e \in \Sigma_1} H_{\mathcal L(e)}(x)^{s_e} = \prod_{e \in \Sigma_1} H_\Sigma(x, \varphi_e)^{-s_e} 
= H_\Sigma(x, \sum_{e \in \Sigma_1} s_e \varphi_e)^{-1}. 
$$
As a result, 
$$
Z((s_e)_{e \in \Sigma_1}) = Z_\Sigma(\varphi)
$$
with $\varphi = \sum_{e \in \Sigma_1} s_e \varphi_e$. That the conditions required by Theorem \ref{thm:dela} are satisfied is the content of Proposition \ref{prop:dela}.  The only fact that requires verification is the statement that, in the notations of Theorem \ref{thm:dela}, $\deg Q=0$. By Theorem \ref{thm:dela}, $\deg Q =  n - \rank \mathcal L$, but by Proposition \ref{prop:dela} our linear forms are linearly independent and as a result $n = \rank \mathcal L$. 
\end{proof}

\begin{corollary}\label{coro:main}
Fix positive real numbers $\beta_e$, $e \in \Sigma_1$. For a positive real number $B > 1$, set 
$$
N_X(B) = \# \{ x \in X(F) \mid 1 \leq H_{\mathcal L(e)}(x) \leq B^{\beta_e}, e \in \Sigma_1 \}.
$$
There is a constant $C >0$ and $\epsilon' > 0$ such that 
$$
N_X(B) = C B^{\sum_{e \in \Sigma_1} \beta_e} (1 + O(B^{-\epsilon'})
$$
as $B \to \infty$. 
\end{corollary}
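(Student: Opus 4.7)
The plan is to observe that the corollary, as worded, is a direct restatement of Theorem \ref{thm:main}. The set whose cardinality defines $N_X(B)$ is
\[
\{ x \in T(F) \mid 1 \leq H_{\mathcal L(e)}(x) \leq B^e,\ e \in \Sigma_1 \},
\]
which is literally the set counted by $N_T(B)$ in Theorem \ref{thm:main}. Hence $N_X(B) = N_T(B)$ for every $B \geq 1$, and the asymptotic $N_X(B) = C B^{\sum_e \beta_e}(1+O(B^{-\epsilon'}))$ follows by taking the same constant $C$ and setting $\epsilon' = \epsilon$ as furnished by the theorem. No further geometric or analytic input beyond Theorem \ref{thm:main} is needed.

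The only non-trivial piece to confirm is that the asymptotic really has no $(\log B)$ factor, i.e.\ that the polynomial $Q_{\mathbf{\beta}}$ appearing in de la Bret\`{e}che's Theorem \ref{thm:dela} is actually a constant. For this, one checks that the family $\mathcal L$ in hypothesis (P2), as applied in the proof of Theorem \ref{thm:main}, is exactly the family $\{\tilde l_1, \dots, \tilde l_{\tilde m}\}$ produced by Proposition \ref{prop:dela}; these forms are linearly independent by that proposition, so the number $n$ of forms equals $\rank \mathcal L$, and Theorem \ref{thm:dela} yields $\deg Q_{\mathbf{\beta}} \leq n - \rank \mathcal L = 0$. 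That the resulting constant $C = Q_{\mathbf{\beta}}$ is strictly positive follows from Proposition \ref{prop:limit} together with the non-vanishing of
\[
\lim_{\varphi \to 0} \hat H_\Sigma(1, \varphi + \varphi_\Sigma) \prod_{e \in \Sigma_1} \frac{\varphi(e)}{\varphi(e)+1}
\]
recorded at the end of \S\ref{subsect:distinguished}, and the strict positivity of the $\mathcal X$-function on the interior of the effective cone.

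There is no main obstacle to record: the corollary is a tautological reformulation of the theorem, both counting the same rational points of $T(F)$ with the same height constraints. The substantive work underlying the asymptotic, namely the meromorphic continuation and growth estimates for $Z_\Sigma$ supplied by Proposition \ref{prop:dela}, and the Tauberian extraction performed via Theorem \ref{thm:dela}, has already been discharged in the proof of Theorem \ref{thm:main}. The proof proposal therefore reduces to a single line of bookkeeping once the identification $N_X(B) = N_T(B)$ is noted.
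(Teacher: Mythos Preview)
Your reading is literally correct: as printed, the set defining $N_X(B)$ ranges over $x \in T(F)$, so it coincides with $N_T(B)$ and the corollary collapses to Theorem~\ref{thm:main}. However, this is a typographical slip in the statement; the intended assertion (consistent with the introduction, with the notation $N_X$, and with the paper's own proof) is that the count ranges over $x \in X(F)$, i.e.\ over all $F$-rational points of the compact toric variety, not just the open torus. So while your one-line identification is unassailable against the text as written, it does not establish the result the corollary is meant to record.

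The paper's proof addresses the intended statement by induction on $d = \dim X$, using the stratification $X = T \cup \bigcup_{e \in \Sigma_1} D_e$. Each boundary divisor $D_e$ is itself a smooth projective toric variety of dimension $d-1$ (for the quotient fan $\Sigma_e$ in $(N_e)_\bR$), so by the induction hypothesis the number of its rational points satisfying the height bounds is $\ll B^{\sum_{f \in \Sigma_1 \cap \Sigma_e} \beta_f}$. Since $e \notin \Sigma_1 \cap \Sigma_e$, this exponent is strictly smaller than $\sum_{e \in \Sigma_1} \beta_e$, so the boundary contributes only to the error term; one may take $\epsilon'$ to be any positive number below $\min\bigl(\epsilon, \min_{e} \beta_e\bigr)$. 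This inductive boundary estimate is the content you are missing.

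Your side remarks on why $\deg Q_{\mathbf\beta} = 0$ (via $n = \rank\,\mathcal L$ from the linear independence of the $\tilde l_j$) and why $C > 0$ (via Proposition~\ref{prop:limit} and the nonvanishing limit at $\varphi = 0$) are correct and useful, but they properly belong to the proof of Theorem~\ref{thm:main} rather than to the corollary.
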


\begin{proof}
We will do induction on $d=\dim X$. The key observation is that $X = T \cup \cup_{e \in \Sigma_1} D_e$. If $d=1$, then $X$ is the union of $T$ with a finite set, so the result follows from Theorem \ref{thm:main}. Now suppose the result is true for all toric varieties of dimension $d-1$. Then 
$$
\# \{ x \in D_e(F) \mid 1 \leq H_{\mathcal L(f)}(x) \leq B^f, f \in \Sigma_1 \} 
$$
$$
\leq \# \{ x \in D_e(F) \mid 1 \leq H_{\mathcal L(f)}(x) \leq B^f, f \in \Sigma_1 \cap \Sigma_e\} 
$$
$$
\ll B^{\sum_{f \in \Sigma_1 \cap \Sigma_e} \beta_f}.
$$
As $e \not\in \Sigma_1 \cap \Sigma_f$, the result follows with any $\epsilon'$ less than the minimum of the $\epsilon$ from Theorem \ref{thm:main} and the numbers $\beta_e$, $e \in \Sigma_1$. 
\end{proof}

{\em Address:} Department of Mathematics, Statistics, and Computer Science, University of Illinois at Chicago, 851 S Morgan St, Chicago, IL 60607. 

\

{\em Email:} ademir3@uic.edu 

{\em Email:} rtakloo@uic.edu 


\begin{thebibliography}{99}
\bibitem{BM} Batyrev, Victor ; Manin, Yu. I. 
{\em Sur le nombre des points rationnels de hauteur born\'{e} des vari\'{e}t\'{e}s alg\'{e}briques.}
Math. Ann. 286 (1990), no. 1-3, 27-43.
\bibitem{anisotropic} Batyrev, Victor V.; Tschinkel, Yuri. {\em Rational points of bounded height on compactifications of anisotropic tori}. Internat. Math. Res. Notices 1995, no. 12, 591-635. 
\bibitem{BT1} Batyrev, V.; Tschinkel, Yu. {\em Height zeta functions of toric varieties}. Algebraic geometry, 5. J. Math. Sci. 82 (1996), no. 1, 3220-3239. 
\bibitem{example} Batyrev, Victor V.;  Tschinkel, Yuri. {\em Rational points on some Fano cubic bundles.} C. R. Acad. Sci.Paris S\'{e}r. I Math., 323(1):41-46, 1996.
\bibitem{BT2}  Batyrev, Victor V.; Tschinkel, Yuri. {\em Manin's conjecture for toric varieties}. J. Algebraic Geom. 7 (1998), no. 1, 15-53. 
\bibitem{Polarized} Batyrev, Victor V.; Tschinkel, Yuri. {\em Tamagawa numbers of polarized algebraic varieties}. 
Ast\'{e}risque,(251):299-340, 1998. Nombre et r\'{e}partition de points de hauteur born\,{e}e (Paris, 1996)
\bibitem{dela1}  de la Bret\`{e}che, R\'{e}gis. {\em Compter des points d'une vari\'{e}t\,{e} torique}. J. Number Theory 87 (2001), no. 2, 315-331.
\bibitem{dela2} de la Bret\`{e}che, R\'{e}gis. {\em Estimation de sommes multiples de fonctions arithm\'{e}tiques}.  Compositio Math. 128 (2001), no. 3, 261-298.  
\bibitem{FMT}  Franke, Jens; Manin, Yuri I.; Tschinkel, Yuri. {\em Rational points of bounded height on Fano varieties}. Invent. Math. 95 (1989), no. 2, 421-435.
\bibitem{Fulton}  Fulton, William. {\em Introduction to toric varieties}. Annals of Mathematics Studies, 131. The William H. Roever Lectures in Geometry. Princeton University Press, Princeton, NJ, 1993. xii+157 pp.
\bibitem{LST} Lehman, Brian; Sengupta; Akash Kumar; Tanimoto, Sho. {\em Geometric consistency of Manin's conjecture.} Preprint, 2019. 
\bibitem{LT}  Lehmann, Brian; Tanimoto, Sho. {\em On exceptional sets in Manin's conjecture}. Res. Math. Sci. 6 (2019), no. 1, Paper No. 12, 41 pp.
\bibitem{Peyre} Peyre, Emmanuel. {\em Hauteurs et mesures de Tamagawa sur les vari\'{e}t\'{e}s de Fano.} Duke Math. J., 79(1):101-218, 1995.
\bibitem{Peyre2} Peyre, Emmanuel. {\em Libert\'{e} et accumulation.} Documenta Math. 22 (2017), 1615-1659.
\bibitem{Peyre3} Peyre, Emmanuel. {\em Beyond heights: Slopes and distribution of rational points.} Arakelov geometry and diophantine applications, Lecture Notes in Mathematics 2276 (2020), 1-67.
\bibitem{Marta} Pieropan, Marta, {\em  Imaginary quadratic points on toric varieties via universal torsors}. 
Manuscripta Math. 150 (2016), no. 3-4, 415--439.
\bibitem{Salberger} P. Salberger,  {\em Tamagawa measures on universal torsors and points of bounded height
on Fano varieties}, Nombre et r\'{e}partition de points de hauteur born\'{e}e, Ast\'{e}risque,
vol. 251, SMF, Paris, 1998, pp. 91--258.
\bibitem{Serre} Serre, Jean-Pierre. {\em Lectures on the Mordell-Weil theorem.} Aspects of Mathematics, vol. E15,
Vieweg, Braunschweig, Wiesbaden, 1989.
\bibitem{selecta} Strauch, Matthias; Tschinkel, Yuri. {\em Height zeta functions of toric bundles over flag varieties.} Selecta Math. (N.S.) 5 (1999), no. 3, 325-396.
\end{thebibliography}
\end{document}